\numberwithin{equation}{section}
\newcommand{\R}{\mathbb{R}}
\newcommand{\N}{\mathbb{N}}
\newcommand{\I}{\mathcal{I}}
\newcommand{\J}{\mathcal{J}}
\newcommand{\T}{\mathcal{T}}
\newcommand{\V}{\mathcal{V}}
\DeclareMathOperator{\dive}{div}
\newtheorem{lem}{Lemma}
\newtheorem{thm}{Theorem}
\theoremstyle{remark}
\newtheorem{remark}{Remark}
\begin{document}

\title{Ground states for a fractional scalar field problem with critical growth}
\author{Vincenzo Ambrosio}
\address{Dipartimento di Matematica e Applicazioni "R. Caccioppoli"\\
         Universit\`a degli Studi di Napoli Federico II\\
         via Cinthia, 80126 Napoli, Italy}
\email{vincenzo.ambrosio2@unina.it}
\keywords{Fractional Laplacian, Mountain Pass, critical growth, radially symmetric solutions}
\subjclass[2010]{35A15, 35B33, 35J60, 35R11, 49J35}

\maketitle
\begin{abstract}
We prove the existence of a ground state solution for the following fractional  scalar field equation
\begin{align*}
(-\Delta)^{s} u= g(u) \mbox{ in } \R^{N} 
\end{align*}
where $s\in (0,1)$, $N> 2s$, $(-\Delta)^{s}$ is the fractional Laplacian, and $g\in C^{1, \beta}(\R, \R)$ is an odd function satisfying the critical growth assumption.
\end{abstract}

\section{Introduction}

\noindent
The aim of this paper is  to study the following fractional scalar field equation 
\begin{align}\label{P}
(-\Delta)^{s} u= g(u) \mbox{ in } \R^{N} 
\end{align}
with $s\in (0,1)$, $N> 2s$, and $g: \R\rightarrow \R$ is a smooth function verifying some suitable growth conditions. Here $(-\Delta)^{s}$ is the fractional Laplacian which can be defined for any function $u$ in the Schwarz class as
$$
(-\Delta)^{s}u(x)=C_{N,s} \lim_{\epsilon \rightarrow 0} \int_{\R^{N}\setminus B_{\epsilon}(x)} \frac{u(x)-u(y)}{|x-y|^{N+2s}} dy  \quad (x\in \R^{N})
$$
where $C_{N,s}$ is a dimensional constant depending only on $N$ and $s$, whose value can be found in \cite{DPV}.\\
A basic motivation for the study of (\ref{P}) comes from looking for standing waves $\psi(x, t)=u(x)e^{-\imath ct}$ for the fractional Schr\"odinger equation 
$$
\imath \frac{\partial \psi}{\partial t}=(-\Delta)^{s} \psi+g(\psi) \quad (t, x)\in \R\times \R^{N}.
$$
Such equation has been proposed by Laskin \cite{Laskin1, Laskin2}, and comes from an expansion of the Feynman path integral from Brownian-like to Levy--like quantum mechanical paths. In Laskin's studies, the Feynman path integral leads to the classical Schr\"odinger equation and the path integral over L\`evy trajectories leads to the fractional Schr\"odinger equation.\\
When $s=1$, (\ref{P}) corresponds to the classical Schr\"odinger equation $-\Delta u=g(u)$ in $\R^{N}$, which has been extensively studied by several authors, and we cannot review the huge literature here.\\
Nowadays, there are many papers dealing with the existence of solutions for the fractional Schr\"odinger equation.
Di Pierro et al. \cite{DPPV} proved the existence of a positive and spherically symmetric solution to (\ref{P}) when $g(u)=-u+|u|^{p-1}u$ with $1<p<\frac{N+2s}{N-2s}$. 
Felmer et al. \cite{FQT} investigated existence, regularity, decay and symmetry properties of positive solution to $(-\Delta)^{s}u+u=f(x, u)$, when $f$ has subcritical growth and satisfies the Ambrosetti-Rabinowitz condition. Secchi \cite{Secchi1} showed the existence of ground state solutions for a nonlinear Schr\"odinger equation with an external potential, via minimization on the Nehari manifold. Coti Zelati and Nolasco \cite{CN} obtained the existence of a ground state of some fractional Schr\"odinger equation involving the operator $(-\Delta+m^{2})$ with $m>0$. D\'avila et al. \cite{DDW} considered the existence and concentration phenomena of multi-peak solutions for a fractional Schr\"odinger equation, by using Lyapunov-Schmidt reduction method.
In \cite{A} and \cite{CW}, it has been established the existence and the multiplicity of ground state solutions when the nonlinearity $g$ in (\ref{P}) satisfies the following Berestycki-Lions \cite{BL1} type assumptions
\begin{compactenum}[({\it H}1)]
\item $\displaystyle{-\infty <\liminf_{t\rightarrow 0^{+}} \frac{g(t)}{t} \leq \limsup_{t\rightarrow 0^{+}} \frac{g(t)}{t}=-a<0}$;
\item $\displaystyle{-\infty <\limsup_{t\rightarrow +\infty} \frac{g(t)}{t^{2^{*}_{s}-1}} \leq 0}$, where ${\displaystyle{2^{*}_{s}=\frac{2N}{N-2s}}}$;
\item There exists  $\xi_{0}>0$ such that  $\displaystyle{G(\xi_{0})= \int_{0}^{\xi_{0}} g(\tau) d\tau >0}$.
\end{compactenum}
\smallskip

\noindent
Motivated by the last two papers, in the present paper we aim to investigate the existence of least energy solutions for the equation (\ref{P}), when $g$ satisfies a critical growth.
In this case, the main difficulty is due to the lack of compactness of the embedding of $H^{s}(\R^{N})$ into $L^{2^{*}_{s}}(\R^{N})$.\\
The specific assumptions we are considering on $g: \R \rightarrow \R$ are now listed as follows:
\begin{compactenum}[({\it g}1)]
\item $g\in C^{1, \beta}(\R, \R)$ for some $\beta>\max\{0, 1-2s\}$ and $g$ is odd;
\item $\displaystyle{\lim_{t\rightarrow 0^{+}} \frac{g(t)}{t}=-a<0}$;
\item $\displaystyle{\lim_{t\rightarrow +\infty} \frac{g(t)}{t^{2^{*}_{s}-1}}= b>0}$, where $\displaystyle{2^{*}_{s}=\frac{2N}{N-2s}}$;
\item There exist  $C>0$ and $\displaystyle{\max\left\{2, \frac{4s}{N-2s}\right\}<q<2^{*}_{s}}$ such that  
$$
g(t)-bt^{{2^{*}_{s}}-1} +a t \geq C t^{q-1} \mbox{ for all } t>0.
$$
\end{compactenum}
We point out that the regularity of $g$ is higher than in \cite{BL1}, and this seems to be due to the more demanding assumptions for ``elliptic" regularity in the framework of fractional operators.
In the classic setting, that is $s=1$, the assumptions $(g3)$ and $(g4)$ have been used in \cite{ZZ} to study ground state solutions for $-\Delta u=g(u)$ in $\R^{N}$, when $g$ has critical growth.
\begin{remark}
Let us observe that if $g(t)=b|t|^{2^{*}_{s}-2}t-at$, then $g$ satisfies $(g1)$-$(g3)$ but does not verify $(g4)$. Moreover, in view of the Pohozaev identity \cite{CW}, it is not difficult to prove that (\ref{P}) does not admit solution. Then, without $(g4)$, the assumptions $(g1)$-$(g3)$ are not sufficient to guarantee the existence of a ground state to (\ref{P}). 
\end{remark}

\noindent
Our first main result can be stated as follows:
\begin{thm}\label{thm1}
Let $s\in (0,1)$ and $N> 2s$. Assume that  $g$ verifies $(g1)$-$(g4)$. Then (\ref{P}) possesses a radial positive least energy solution $\omega\in H^{s}(\R^{N})$. 
\end{thm}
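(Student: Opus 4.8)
The plan is to obtain $\omega$ as a mountain pass critical point, in the radial class, of the energy functional of a sign--truncated version of $(\ref{P})$, and then to push the min--max level strictly below the energy of a single critical bubble by a Brezis--Nirenberg type estimate in which $(g4)$ is decisive. First I would replace $g$ by $\bar g$, with $\bar g(t)=g(t)$ for $t\ge 0$ and $\bar g(t)=-at$ for $t<0$; since $(g1)$--$(g2)$ force $g'(0)=-a$, one still has $\bar g\in C^{1,\beta}(\R,\R)$, and any $u\in H^{s}(\R^{N})$ solving $(-\Delta)^{s}u=\bar g(u)$ satisfies, testing with $-u^{-}$ and using the pointwise inequality that yields $\langle(-\Delta)^{s}u,-u^{-}\rangle\ge[u^{-}]_{s}^{2}$, the estimate $[u^{-}]_{s}^{2}+a\|u^{-}\|_{2}^{2}\le 0$; hence $u\ge 0$ and $u$ solves $(\ref{P})$. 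Writing $\bar G(t)=\int_{0}^{t}\bar g$, one has $\bar G(t)=-\tfrac a2 t^{2}$ for $t\le 0$ and, integrating $(g4)$, $\bar G(t)\ge\tfrac{b}{2^{*}_{s}}t^{2^{*}_{s}}+\tfrac Cq t^{q}-\tfrac a2 t^{2}$ for $t>0$. I would then work, on $H^{s}_{rad}(\R^{N})$, with
\[
\mathcal{I}(u)=\frac12[u]_{s}^{2}-\int_{\R^{N}}\bar G(u)\,dx .
\]

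By $(g2)$--$(g3)$ and the fractional Sobolev inequality, $\mathcal{I}(u)\ge c_{0}\|u\|_{H^{s}}^{2}-C\|u\|_{H^{s}}^{2^{*}_{s}}$ near the origin, so $\inf_{\|u\|_{H^{s}}=\rho}\mathcal{I}>0=\mathcal{I}(0)$; on the other hand, taking a radial bump $u_{0}\ge 0$ with $\int_{\R^{N}}\bar G(u_{0})>0$ and using the dilations $u_{0}(\cdot/\sigma)$, one has $\mathcal{I}(u_{0}(\cdot/\sigma))\to-\infty$ as $\sigma\to+\infty$; call $c>0$ the resulting mountain pass level. The crucial (and hardest) point is the strict inequality $c<c^{*}$, where $c^{*}$ is the least energy of the limiting critical equation $(-\Delta)^{s}v=b\,v^{2^{*}_{s}-1}$ in $\R^{N}$, i.e. the energy of one standard bubble. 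I would prove it by evaluating the min--max along the path $\sigma\mapsto U_{\varepsilon}(\cdot/\sigma)$, with $U_{\varepsilon}$ a truncated fractional Aubin--Talenti instanton concentrating at scale $\varepsilon$, and maximizing in $\sigma$: the leading term reproduces $c^{*}$, the errors due to the truncation and to $\|U_{\varepsilon}\|_{2}^{2}$ are $O(\varepsilon^{\min\{2s,\,N-2s\}})$ (with a $|\log\varepsilon|$ factor when $N=4s$), while the contribution of $\tfrac Cq\int_{\R^{N}}U_{\varepsilon}^{q}$ is $\le-c\,\varepsilon^{\,N-\frac{N-2s}{2}q}$; the two requirements $q>2$ and $q>\tfrac{4s}{N-2s}$ in $(g4)$ are exactly what forces this negative term to dominate the errors, so that $c<c^{*}$ for $\varepsilon$ small. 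I expect this to be the main obstacle, since it rests on sharp asymptotics of the instanton integrals.

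Next, a bounded Palais--Smale sequence $(u_{n})$ for $\mathcal{I}$ at level $c$ can be produced by Jeanjean's monotonicity trick (equivalently, via the scaling--augmented functional on $H^{s}_{rad}(\R^{N})\times\R$, which in addition makes $(u_{n})$ satisfy the Pohozaev identity up to $o(1)$). Passing to a weak limit $u_{n}\rightharpoonup\omega$, the compact embedding $H^{s}_{rad}(\R^{N})\hookrightarrow L^{p}(\R^{N})$ for $2<p<2^{*}_{s}$ takes care of the subcritical part of $\bar g$, so $\omega$ is a critical point of $\mathcal{I}$; a fractional Brezis--Lieb splitting then gives $[u_{n}-\omega]_{s}^{2}\to\ell$ with $\ell=0$ or $\ell$ at least the energy of one bubble, and in the latter case $c=\mathcal{I}(\omega)+\tfrac sN\ell\ge c^{*}$ — using $\mathcal{I}(\omega)=\tfrac sN[\omega]_{s}^{2}\ge 0$, the Pohozaev identity for the critical point $\omega$, whose validity needs only the bootstrap regularity and decay furnished by $(g1)$ — contradicting $c<c^{*}$. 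Hence $u_{n}\to\omega$ strongly in $H^{s}$, $\mathcal{I}(\omega)=c>0$ so $\omega\ne 0$, and by Palais' principle of symmetric criticality $\omega$ solves $(-\Delta)^{s}u=\bar g(u)$ in $\R^{N}$; therefore $\omega\ge 0$, $\bar g(\omega)=g(\omega)$, and $\omega$ solves $(\ref{P})$. Finally, $(g1)$ and fractional elliptic regularity make $\omega$ continuous with $\omega(x)\to 0$ as $|x|\to\infty$, and since $g(t)+at\ge 0$ for $t\ge 0$, the strong maximum principle for $(-\Delta)^{s}+a$ yields $\omega>0$ in $\R^{N}$.

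It remains to check that $\omega$ is a \emph{least energy} solution. Let $\mathcal{J}(u)=\tfrac12[u]_{s}^{2}-\int_{\R^{N}}G(u)\,dx$ be the energy of $(\ref{P})$ and let $v\in H^{s}(\R^{N})\setminus\{0\}$ be any solution. The fractional Pohozaev identity \cite{CW} gives $\tfrac{N-2s}{2}[v]_{s}^{2}=N\int_{\R^{N}}G(v)\,dx$ (in particular this common value is positive); denoting by $v^{\#}$ the symmetric decreasing rearrangement of $|v|$, the path $\sigma\mapsto v^{\#}(\cdot/\sigma)$ lies in $H^{s}_{rad}(\R^{N})$, is admissible in the min--max defining $c$, and — by the fractional P\'olya--Szeg\"o inequality $[v^{\#}]_{s}\le[v]_{s}$ together with $\int_{\R^{N}}G(v^{\#})\,dx=\int_{\R^{N}}G(v)\,dx$ ($G$ being even) — satisfies $\max_{\sigma>0}\mathcal{I}(v^{\#}(\cdot/\sigma))\le\tfrac sN[v]_{s}^{2}=\mathcal{J}(v)$. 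Hence $c\le\mathcal{J}(v)$, and since $\mathcal{J}(\omega)=\mathcal{I}(\omega)=c$ (because $\omega\ge 0$), $\omega$ is a radial positive least energy solution of $(\ref{P})$, which proves Theorem~\ref{thm1}.
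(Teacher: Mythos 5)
Your proposal is correct in outline, but it takes a genuinely different route from the paper. The paper does \emph{not} do a direct mountain pass with a Brezis--Nirenberg compactness threshold; it proceeds by constrained minimization on the constraint set $\mathcal S=\{u:\mathcal V(u)=1\}$, Berestycki--Lions style. The key estimate there is Lemma~\ref{lem1}: $0<M<\tfrac12(2^*_s)^{(N-2s)/N}S_*$ for $M=\inf\{\mathcal T(u):\mathcal V(u)=1\}$. The upper bound is proved by testing with the truncated Aubin--Talenti instanton and showing that the quantity $\Gamma_\varepsilon=\tfrac Cq\|v_\varepsilon\|_q^q-\tfrac a2\|v_\varepsilon\|_2^2$ satisfies $\Gamma_\varepsilon/\varepsilon^{N-2s}\to+\infty$ --- exactly the same dichotomy in $q$ ($q>2$ when $N\ge 4s$, $q>\tfrac{4s}{N-2s}$ when $N<4s$) that powers your claim $c<c^*$. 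Lemma~\ref{lem2} then recovers compactness: any Schwarz-symmetrized minimizing sequence is bounded, and a Brezis--Lieb/Lemma~\ref{CW} splitting shows that dichotomy and vanishing would force $M\ge\tfrac12(2^*_s)^{(N-2s)/N}S_*$, contradicting Lemma~\ref{lem1}. Finally, the critical point is produced via the explicit bijection $\Phi$ between $\mathcal S$ and the Pohozaev manifold $\mathcal P=\{\mathcal J=0\}$, a Lagrange multiplier argument, and the Pohozaev identity to force $\lambda=0$; the minimality over $\mathcal P$ immediately gives the least-energy property.

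Compared with this, your plan buys a more ``textbook'' min--max narrative and avoids the constraint set altogether, but at a cost: (i) you must \emph{produce} a bounded Palais--Smale sequence, and for a functional without an Ambrosetti--Rabinowitz condition this means either Jeanjean's monotonicity trick or the Jeanjean--Tanaka scaling-augmented functional --- additional machinery that the paper's constrained-minimization route makes unnecessary, since boundedness of the minimizing sequence on $\{\mathcal V=1\}$ is essentially automatic there (via $(g2)$--$(g3)$ and the $L^{2^*_s}$ bound); (ii) you need the sign-truncation $\bar g$, Palais symmetric criticality, and a strong maximum principle to get positivity, whereas the paper gets it for free from symmetric rearrangement of the minimizing sequence; (iii) your ``least energy'' argument uses the P\'olya--Szeg\"o inequality plus the Pohozaev-normalized scaling path, which is correct and tight, but the paper's $\mathcal S\leftrightarrow\mathcal P$ correspondence does this with less apparatus. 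On the plus side, your approach makes the relation to the limiting critical problem $(-\Delta)^s v=b\,v^{2^*_s-1}$ and the role of $c^*$ completely transparent. The one place you should be honest about a genuine gap is the step you yourself flag as ``the hardest point'': the estimate $c<c^*$ is not a soft consequence of the exponent dichotomy, but must be carried out with precise matched asymptotics for the truncated bubble, including the $|\log\varepsilon|$ case $N=4s$; as written it is a plan, not a proof. Similarly, ``a bounded PS sequence can be produced by Jeanjean's monotonicity trick'' needs to be substantiated (continuity of the level $c_\lambda$ and passage $\lambda\to1$), which is a nontrivial extra page of work. With those two blocks filled in, the argument is sound and delivers Theorem~\ref{thm1} by a genuinely independent route.
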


\noindent
We recall that for a weak solution of problem (\ref{P}) we mean a function $u\in H^{s}(\R^{N})$ such that 
$$
\iint_{\R^{2N}} \frac{(u(x)-u(y))}{|x-y|^{N+2s}} (\varphi(x)-\varphi(y)) \, dx dy=\int_{\R^{N}} g(u) \varphi \, dx
$$
for any $\varphi \in H^{s}(\R^{N})$.\\
One of the main difficulty in studying (\ref{P}) is the nonlocal character of the fractional Laplacian $(-\Delta)^{s}$ with $s\in (0,1)$. To circumvent this problem, Caffarelli and Silvestre \cite{CS} showed that it is possible to realize $(-\Delta)^{s}$  as an operator that maps a Dirichlet boundary condition to a Neumann boundary condition via an extension degenerate elliptic problem in the upper-half space $\R^{N+1}_{+}$. 
Anyway, in this work we prefer to analyze (\ref{P}) directly in $H^{s}(\R^{N})$ in order to adapt the variational techniques used in the classic framework.
More precisely, we will prove our results following the approaches in  \cite{JT} and \cite{ZZ}.\\
Let us introduce the following functional on $H^{s}(\R^{N})$
$$
\I(u)=\frac{1}{2} \iint_{\R^{2N}} \frac{|u(x)-u(y)|^{2}}{|x-y|^{N+2s}} dx dy-\int_{\R^{N}} G(u) dx=:\T(u)-\V(u),
$$
and we look for critical points of $\I$.
In order to do this, we consider the constraint minimization problem
 \begin{equation*}
M:=\inf \left \{\T(u): \V(u)=1, u\in H^{s}(\R^{N})\right\}.
\end{equation*}
By assumptions on $g$, it follows that $M$ satisfies the following bounds:
$$
0<M<\frac{1}{2}(2^{*}_{s})^{\frac{N-2s}{N}}S_{*}
$$
where $S_{*}$ is the best constant in the fractional Sobolev embedding $H^{s}(\R^{N})$ in $L^{2^{*}_{s}}(\R^{N})$.
Thanks to this information, we can see that the above minimization problem admits a positive and radially symmetric minimizer in $H^{s}(\R^{N})$.
At this point, we use some arguments similar to those developed in \cite{JT}, to prove that $\I$ admits a radial positive critical point $\omega\in H^{s}(\R^{N})$, which is a least energy solution to (\ref{P}).\\
\noindent
Finally, observing that $\I$ has a mountain pass geometry, we are also able to prove the following result in the spirit of \cite{JT}:
\begin{thm}\label{thm2}
Let $\omega$ be the least energy solution in Theorem \ref{thm1}. Then, there exists a path $\gamma \in \Gamma$, with $\Gamma=\{\gamma\in C([0, 1], H^{s}(\R^{N})): \gamma(0)=0 \mbox{ and } \I(\gamma(1))<0\}$, such that $\omega \in \gamma([0,1])$ and $\max_{t\in [0,1]} \I(\gamma(t))= \I(\omega)$. Moreover, the Mountain Pass value
\begin{equation}\label{MPvalue}
c=\inf_{\gamma\in \Gamma} \max_{t\in [0, 1]} \I(\gamma(t))
\end{equation}
coincides with 
\begin{align}\label{LEL}
m=\inf \left\{\I(u) : u\in H^{s}(\R^{N})\setminus \{0\} \mbox{ is a solution of (\ref{P})}\right\}.
\end{align}
\end{thm}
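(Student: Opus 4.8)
The plan is to run the rescaling scheme of \cite{JT}. Two facts drive everything: first, under the dilation $u\mapsto u_{\sigma}:=u(\cdot/\sigma)$ one has $\T(u_{\sigma})=\sigma^{N-2s}\T(u)$ and $\V(u_{\sigma})=\sigma^{N}\V(u)$; second, the fractional Pohozaev identity \cite{CW}, valid for every weak solution $u$ of (\ref{P}), reads $(N-2s)\T(u)=N\V(u)$, so that $\I(u)=\tfrac{2s}{N}\T(u)$ for any such $u$. In particular, since $\omega$ is a nontrivial solution, $\T(\omega)>0$ and hence $\V(\omega)>0$.

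\emph{Step 1: the path.} Consider $h(\sigma):=\I(\omega_{\sigma})=\sigma^{N-2s}\T(\omega)-\sigma^{N}\V(\omega)$ for $\sigma\ge 0$. Then $h(0)=0$, $h(\sigma)\to-\infty$ as $\sigma\to+\infty$, and $h'(\sigma)=\sigma^{N-2s-1}\big[(N-2s)\T(\omega)-N\sigma^{2s}\V(\omega)\big]$ vanishes on $(0,\infty)$ only at $\sigma=1$, by the Pohozaev identity; hence $h$ is strictly increasing on $[0,1]$, strictly decreasing on $[1,\infty)$, and $\max_{\sigma\ge 0}h(\sigma)=h(1)=\I(\omega)$. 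Fix $L>1$ with $h(L)<0$ and set $\gamma_{0}(t):=\omega_{Lt}$ for $t\in(0,1]$, $\gamma_{0}(0):=0$. Since $\|\omega_{\sigma}\|_{H^{s}(\R^{N})}^{2}=\sigma^{N}\|\omega\|_{L^{2}}^{2}+2\sigma^{N-2s}\T(\omega)\to 0$ as $\sigma\to 0^{+}$, and $\sigma\mapsto\omega_{\sigma}$ is continuous from $(0,\infty)$ into $H^{s}(\R^{N})$, one gets $\gamma_{0}\in C([0,1],H^{s}(\R^{N}))$ with $\gamma_{0}(0)=0$ and $\I(\gamma_{0}(1))=h(L)<0$, so $\gamma_{0}\in\Gamma$; moreover $\max_{t\in[0,1]}\I(\gamma_{0}(t))=\max_{\sigma\in[0,L]}h(\sigma)=\I(\omega)$ and $\omega=\omega_{1}=\gamma_{0}(1/L)\in\gamma_{0}([0,1])$. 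This is the first assertion, and it gives $c\le\I(\omega)$.

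\emph{Step 2: $c=m$.} Since $\omega$ is a least energy solution, $c\le\I(\omega)=m$, so it suffices to prove $c\ge m$. Introduce the Pohozaev set $\mathcal{P}:=\{u\in H^{s}(\R^{N})\setminus\{0\}:(N-2s)\T(u)=N\V(u)\}$, which contains every nontrivial solution of (\ref{P}). If $u\in\mathcal{P}$, then $\V(u)=\tfrac{N-2s}{N}\T(u)>0$; taking $\lambda:=\V(u)^{-1/N}$ (so $\V(u_{\lambda})=1$) and using the definition of $M$ gives $\T(u)=\V(u)^{(N-2s)/N}\T(u_{\lambda})\ge M\,\V(u)^{(N-2s)/N}$, hence $\T(u)\ge\kappa:=M^{N/(2s)}\big(\tfrac{N-2s}{N}\big)^{(N-2s)/(2s)}$ and $\I(u)=\tfrac{2s}{N}\T(u)\ge\tfrac{2s}{N}\kappa$. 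On the other hand $\omega$, being obtained in Theorem \ref{thm1} from a minimizer $v$ of $M$ via the dilation $\omega=v_{\theta}$ with $\theta^{2s}=\tfrac{(N-2s)M}{N}$, satisfies $\T(\omega)=\theta^{N-2s}M=\kappa$; thus $\inf_{\mathcal{P}}\I$ is attained at $\omega$ and $m=\I(\omega)=\tfrac{2s}{N}\kappa=\inf_{\mathcal{P}}\I$. It now suffices to show that \emph{every $\gamma\in\Gamma$ meets $\mathcal{P}$}, for then $\max_{t}\I(\gamma(t))\ge\inf_{\mathcal{P}}\I=m$ for every $\gamma\in\Gamma$, so $c\ge m$ and $c=m=\I(\omega)$. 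Fix $\gamma\in\Gamma$ and set $J(t):=(N-2s)\T(\gamma(t))-N\V(\gamma(t))$, continuous on $[0,1]$ by continuity of $\T$ and of $\V$ on $H^{s}(\R^{N})$ (the latter holding up to the critical exponent, because $H^{s}(\R^{N})\hookrightarrow L^{2^{*}_{s}}(\R^{N})$). From $\I(\gamma(1))<0$ we get $\V(\gamma(1))>\T(\gamma(1))\ge 0$, whence $J(1)<-2s\,\V(\gamma(1))<0$. Moreover $(g2)$--$(g3)$ (with the continuity of $G$) give $C>0$ with $G(\sigma)\le-\tfrac{a}{4}\sigma^{2}+C|\sigma|^{2^{*}_{s}}$ for all $\sigma\in\R$, and therefore $\V(v)\le C\,\T(v)^{N/(N-2s)}$ for all $v\in H^{s}(\R^{N})$, with $N/(N-2s)>1$; setting $t_{1}:=\sup\{t<1:\gamma(t)=0\}$ (so $t_{1}<1$, since $\gamma(1)\ne 0$), one has $\T(\gamma(t))>0$ and arbitrarily small for $t$ just above $t_{1}$, so $J(t)\ge(N-2s)\T(\gamma(t))-NC\,\T(\gamma(t))^{N/(N-2s)}>0$ there. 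By the intermediate value theorem there is $\bar{t}\in(t_{1},1)$ with $J(\bar{t})=0$, and $\gamma(\bar{t})\ne 0$ because $\bar{t}>t_{1}$; hence $\gamma(\bar{t})\in\mathcal{P}$ and $\max_{t}\I(\gamma(t))\ge\I(\gamma(\bar{t}))\ge\inf_{\mathcal{P}}\I=m$.

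\emph{Main obstacle.} The genuinely delicate ingredients are analytic rather than algebraic. One is the availability of the fractional Pohozaev identity for \emph{all} $H^{s}$-weak solutions of (\ref{P}) --- this is where the regularity of solutions, hence the $C^{1,\beta}$ hypothesis on $g$, enters (through the results quoted via \cite{CW}) --- as it underlies both the profile of $h$ in Step 1 and the identity $m=\tfrac{2s}{N}\kappa$ in Step 2. The other is the continuity of $\V$, and hence of $\I$ and $J$, on $H^{s}(\R^{N})$ in the critical regime, without which the intermediate value step collapses. A minor but necessary point is to ensure that the crossing point $\gamma(\bar{t})$ is genuinely nontrivial, which is handled by starting just past the last return of $\gamma$ to the origin. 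An alternative route, closer to \cite{JT}, would be to apply the mountain pass theorem to produce a Palais--Smale sequence at level $c$ carrying an additional Pohozaev-type estimate and then run the compactness analysis used for Theorem \ref{thm1} (recall $0<M<\tfrac{1}{2}(2^{*}_{s})^{(N-2s)/N}S_{*}$); the argument sketched above is, however, self-contained once Theorem \ref{thm1} is in hand.
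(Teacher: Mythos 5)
Your proof is correct and follows the paper's Jeanjean--Tanaka argument in both steps: the dilation path $\omega(\cdot/\sigma)$ whose energy peaks at $\sigma=1$ thanks to the Pohozaev identity, and the observation that every admissible path must cross the Pohozaev manifold $\mathcal{P}$, on which $\I$ is bounded below by $m$. The only cosmetic differences are that you spell out explicitly (via the bound $\V(v)\le C\,\T(v)^{N/(N-2s)}$ applied just past the last return of $\gamma$ to the origin) the barrier estimate that the paper dispenses with by saying ``modifying slightly the arguments of Lemma~\ref{lem3}'', and that you re-derive $m=\inf_{\mathcal{P}}\I$ inline from the scaling inequality $\T(u)\ge M\V(u)^{(N-2s)/N}$ rather than quoting it from the proof of Theorem~\ref{thm1}.
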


\noindent
Recently, a great interest has been focused on the study of problems involving fractional powers of the Laplacian.
This type of problems arises in many different applications, such as, the optimization, finance, phase transitions,  anomalous diffusion, crystal dislocation, conservation laws, ultra-relativistic limits of quantum mechanics, quasi-geostrophic flows, minimal surfaces and water waves. 
The literature is too wide to attempt a reasonable list of references here, so we derive the interested reader to the work by Di Nezza et al. \cite{DPV}, where a more extensive bibliography and an introduction to the subject are given.\\
\noindent
The organization of this paper is as follows: in Section $2$ we give some useful results which we will use frequently along the paper; in Section $3$ we prove that (\ref{P}) admits a least energy solution; finally in Section $4$ we show that the mountain pass value defined in (\ref{MPvalue}), gives the least energy level.

\section{Preliminaries}

\noindent
In this section we collect a few results that we are later going to use for the proof of the main results. \\
For any $s\in (0,1)$ we define the fractional Sobolev space
$$
H^{s}(\R^{N})= \left\{u\in L^{2}(\R^{N}) : \frac{|u(x)-u(y)|}{|x-y|^{\frac{N+2s}{2}}} \in L^{2}(\R^{2N}) \right \}
$$
endowed with the natural norm 
$$
\|u\|_{H^{s}(\R^{N})} = \sqrt{[u]_{H^{s}(\R^{N})}^{2} + \|u\|_{L^{2}(\R^{N})}^{2}}
$$
where the term
$$
[u]^{2}_{H^{s}(\R^{N})} =\iint_{\R^{2N}} \frac{|u(x)-u(y)|^{2}}{|x-y|^{N+2s}} \, dx \, dy
$$
is the so-called Gagliardo seminorm of $u$. \\

\noindent
For convenience of the reader we recall from \cite{DPV} the following:
\begin{thm}\label{Sembedding}
Let $s\in (0,1)$ and $N>2s$. Then there exists a sharp constant $S_{*}=S(N, s)>0$, whose exact value can be found in \cite{CT}, 
such that for any $u\in H^{s}(\R^{N})$
\begin{equation}\label{FSI}
\|u\|^{2}_{L^{2^{*}_{s}}(\R^{N})} \leq S_{*} [u]^{2}_{H^{s}(\R^{N})}. 
\end{equation}
Moreover $H^{s}(\R^{N})$ is continuously embedded in $L^{q}(\R^{N})$ for any $q\in [2, 2^{*}_{s}]$ and compactly in $L^{q}_{loc}(\R^{N})$ for any $q\in [2, 2^{*}_{s})$. 
\end{thm}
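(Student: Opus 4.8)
The plan is to obtain \eqref{FSI} from the Fourier-side representation of the Gagliardo seminorm together with the Hardy--Littlewood--Sobolev inequality, and then to deduce the continuous and local compact embeddings by interpolation and a Fréchet--Kolmogorov argument, respectively. First I would recall from \cite{DPV} the identity
\begin{equation*}
[u]^{2}_{H^{s}(\R^{N})}=\frac{2}{C_{N,s}}\int_{\R^{N}}|\xi|^{2s}|\widehat{u}(\xi)|^{2}\,d\xi ,
\end{equation*}
which reduces \eqref{FSI} to the estimate $\|u\|^{2}_{L^{2^{*}_{s}}(\R^{N})}\leq C\int_{\R^{N}}|\xi|^{2s}|\widehat{u}(\xi)|^{2}\,d\xi$. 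To prove this I would write $u=I_{s}g$ with $g:=\mathcal{F}^{-1}(|\xi|^{s}\widehat{u})$ and $I_{s}$ the Riesz potential of order $s$, observe that $\|g\|^{2}_{L^{2}(\R^{N})}$ equals the right-hand side by Plancherel, and apply the Hardy--Littlewood--Sobolev inequality $\|I_{s}g\|_{L^{2^{*}_{s}}(\R^{N})}\leq C(N,s)\|g\|_{L^{2}(\R^{N})}$, whose exponent relation $\tfrac{1}{2^{*}_{s}}=\tfrac{1}{2}-\tfrac{s}{N}$ is exactly guaranteed by $N>2s$. For the value of $S_{*}$ and its sharpness I would simply invoke \cite{CT}, where the extremals are identified (up to translations and dilations, the functions $x\mapsto(1+|x|^{2})^{-(N-2s)/2}$) and the optimal constant is computed.

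Next, for the continuous embedding $H^{s}(\R^{N})\hookrightarrow L^{q}(\R^{N})$ with $q\in[2,2^{*}_{s}]$: the endpoint $q=2$ is the trivial bound $\|u\|_{L^{2}(\R^{N})}\leq\|u\|_{H^{s}(\R^{N})}$, the endpoint $q=2^{*}_{s}$ follows from \eqref{FSI} and $[u]_{H^{s}(\R^{N})}\leq\|u\|_{H^{s}(\R^{N})}$, and for $2<q<2^{*}_{s}$ I would interpolate: writing $\tfrac{1}{q}=\tfrac{1-\theta}{2}+\tfrac{\theta}{2^{*}_{s}}$ with $\theta\in(0,1)$, Hölder's inequality gives $\|u\|_{L^{q}(\R^{N})}\leq\|u\|_{L^{2}(\R^{N})}^{1-\theta}\|u\|_{L^{2^{*}_{s}}(\R^{N})}^{\theta}\leq C\|u\|_{H^{s}(\R^{N})}$.

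Finally, for the local compactness $H^{s}(\R^{N})\hookrightarrow\hookrightarrow L^{q}_{loc}(\R^{N})$ with $q\in[2,2^{*}_{s})$, I would fix a bounded domain $\Omega\subset\R^{N}$ and a bounded sequence $(u_{n})$ in $H^{s}(\R^{N})$ and establish the translation estimate
\begin{equation*}
\int_{\R^{N}}|u(x+h)-u(x)|^{2}\,dx=\int_{\R^{N}}|e^{\imath h\cdot\xi}-1|^{2}|\widehat{u}(\xi)|^{2}\,d\xi\leq 4^{1-s}|h|^{2s}\int_{\R^{N}}|\xi|^{2s}|\widehat{u}(\xi)|^{2}\,d\xi ,
\end{equation*}
using $|e^{\imath h\cdot\xi}-1|^{2}\leq\min\{4,|h|^{2}|\xi|^{2}\}\leq 4^{1-s}(|h||\xi|)^{2s}$ for $s\in(0,1)$; this makes the translations of $(u_{n})$ equicontinuous in $L^{2}(\Omega)$, uniformly in $n$, so that together with the uniform $L^{2}(\Omega)$ bound the Fréchet--Kolmogorov criterion yields a subsequence converging in $L^{2}(\Omega)$, and the uniform $L^{2^{*}_{s}}(\R^{N})$ bound from \eqref{FSI} combined with the interpolation inequality of the previous step upgrades this to $L^{q}(\Omega)$ convergence for every $q<2^{*}_{s}$. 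The only genuinely nontrivial ingredient is the identification of the sharp constant $S_{*}$ and its extremals, which is precisely what is borrowed from \cite{CT}; the rest is routine once the Fourier identity for the seminorm is at hand. Since the statement is quoted verbatim from \cite{DPV}, the most economical write-up is in fact to cite \cite{DPV} and \cite{CT} directly.
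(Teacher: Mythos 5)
Your sketch is correct, but be aware that the paper itself offers no proof of this statement: Theorem \ref{Sembedding} is recalled verbatim from \cite{DPV}, with the sharp constant $S_{*}$ and its extremals imported from \cite{CT}, and it is used downstream purely as a black box. So your write-up is a genuinely different (self-contained) route rather than a reconstruction of anything in the paper. The route you choose is the classical Fourier one: the identity $[u]^{2}_{H^{s}(\R^{N})}=\tfrac{2}{C_{N,s}}\int_{\R^{N}}|\xi|^{2s}|\widehat{u}|^{2}d\xi$, the representation $u=I_{s}g$ with $g=\mathcal{F}^{-1}(|\xi|^{s}\widehat{u})$, and Hardy--Littlewood--Sobolev with the exponent relation $\tfrac{1}{2^{*}_{s}}=\tfrac12-\tfrac{s}{N}$; this is valid (justify $u=I_{s}g$ for general $u\in H^{s}(\R^{N})$ by density of smooth compactly supported functions), and the interpolation step and the Riesz--Fr\'echet--Kolmogorov argument with the bound $|e^{\imath h\cdot\xi}-1|^{2}\leq\min\{4,|h|^{2}|\xi|^{2}\}\leq 4^{1-s}|h|^{2s}|\xi|^{2s}$ are also correct (on a bounded $\Omega$, boundedness in $L^{2}(\R^{N})$ plus uniform translation continuity is exactly what the criterion asks for, and interpolation with the uniform $L^{2^{*}_{s}}$ bound upgrades the convergence to every $q<2^{*}_{s}$). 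What each approach buys: the paper's citation keeps Section 2 short and is all that is needed, since the only part of the statement required in sharp form is the exact value of $S_{*}$ (used in Lemma \ref{lem1} through the bubbles $U_{\epsilon}$), and that part cannot be produced by your argument either --- HLS plus Plancherel yields \eqref{FSI} with \emph{some} constant, while the optimality and the extremals $c(\mu^{2}+(x-x_{0})^{2})^{-\frac{N-2s}{2}}$ genuinely come from \cite{CT}, as you correctly acknowledge. Your version buys a self-contained proof of the non-sharp embedding and of the local compactness (for which \cite{DPV} argues by elementary, non-Fourier methods), at the cost of some extra length; your closing remark that the most economical write-up is simply to cite \cite{DPV} and \cite{CT} is precisely what the paper does.
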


\begin{remark}
The exact value of the best constant $S_{*}$ appearing in (\ref{FSI}), has been calculated explicitly in \cite{CT}. Moreover, the authors proved that the equality in (\ref{FSI}) holds if and only if 
$$
u(x)=c(\mu^{2}+(x-x_{0})^{2})^{-\frac{N-2s}{2}}
$$
where $c\in \R$, $\mu>0$ and $x_{0}\in \R^{N}$ are fixed constants.
\end{remark}
\noindent
Now we introduce the space of radial functions in $H^{s}(\R^{N})$
$$
H^{s}_{r}(\R^{N})=\left \{u\in H^{s}(\R^{N}): u(x)=u(|x|)\right\}. 
$$
Related to this space, the following compactness result due to Lions \cite{Lions} holds:
\begin{thm}\cite{Lions}\label{Lions}
Let $s\in (0,1)$ and $N\geq 2$. Then $H^{s}_{r}(\R^{N})$ is compactly in $L^{q}(\R^{N})$ for any $q\in (2, 2^{*}_{s})$.
\end{thm}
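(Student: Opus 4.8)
The plan is to follow the classical strategy for radial compactness, in the spirit of Strauss and Lions: combine the local compactness already recorded in Theorem~\ref{Sembedding} with a uniform control of the $L^{2}$-mass ``at infinity'' coming from radial symmetry, and then conclude by the fractional analogue of P.-L.\ Lions' vanishing lemma. First I would reduce the statement to the following: if $u_{n}\rightharpoonup 0$ weakly in $H^{s}_{r}(\R^{N})$, then $u_{n}\to 0$ strongly in $L^{q}(\R^{N})$ for every $q\in(2,2^{*}_{s})$. Indeed $H^{s}_{r}(\R^{N})$ is a closed subspace of $H^{s}(\R^{N})$, so a weakly convergent sequence has a radial weak limit $u$, and one may replace $u_{n}$ by the radial sequence $u_{n}-u$; being weakly convergent, $(u_{n})$ is bounded in $H^{s}(\R^{N})$, say $\|u_{n}\|_{H^{s}(\R^{N})}\le K$.

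The heart of the argument is an elementary geometric estimate: if $v\in L^{2}(\R^{N})$ is radial, then for every $R>0$ and every $y\in\R^{N}$ with $|y|\ge 2R$,
\[
\int_{B_{R}(y)}|v|^{2}\,dx\;\le\;C_{N}\Big(\frac{R}{|y|}\Big)^{N-1}\,\|v\|_{L^{2}(\R^{N})}^{2}.
\]
To see this, note that $B_{R}(y)$ lies in the annulus $\{\,|y|-R\le|x|\le|y|+R\,\}$; passing to polar coordinates and using that $|v|$ is constant on spheres, the claim follows because the spherical cap $\{|x|=r\}\cap B_{R}(y)$ has $(N-1)$-dimensional measure $\lesssim R^{N-1}$, while the full sphere $\{|x|=r\}$ has measure $\sim r^{N-1}\ge(|y|/2)^{N-1}$ (here $N\ge 2$ is used, so that the ratio $(R/|y|)^{N-1}$ is a negative power of $|y|$). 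Applying this with $v=u_{n}$ and using $\|u_{n}\|_{L^{2}}\le K$, the right-hand side is independent of $n$ and vanishes as $|y|\to\infty$; on the other hand, for $|y|\le\rho_{0}$ one has $\int_{B_{R}(y)}|u_{n}|^{2}\le\|u_{n}\|_{L^{2}(B_{\rho_{0}+R})}^{2}\to0$ as $n\to\infty$, by the compact embedding $H^{s}(\R^{N})\hookrightarrow L^{2}_{loc}(\R^{N})$ of Theorem~\ref{Sembedding}. Choosing first $\rho_{0}$ large and then $n$ large gives
\[
\lim_{n\to\infty}\ \sup_{y\in\R^{N}}\int_{B_{R}(y)}|u_{n}|^{2}\,dx\;=\;0 .
\]

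With this in hand, I would invoke the fractional version of Lions' vanishing lemma: a sequence bounded in $H^{s}(\R^{N})$ whose $L^{2}$-mass on balls of a fixed radius vanishes uniformly converges to zero in $L^{q}(\R^{N})$ for every $q\in(2,2^{*}_{s})$ --- this step uses the strict inequalities $2<q<2^{*}_{s}$ through an interpolation between $L^{2}$ on unit balls and $L^{2^{*}_{s}}$, the latter being controlled by the Sobolev inequality (\ref{FSI}). This yields $u_{n}\to0$ in $L^{q}(\R^{N})$ and hence the asserted compactness.

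The main obstacle lies exactly in the region near infinity: the embedding $H^{s}_{r}(\R^{N})\hookrightarrow L^{2}(\R^{N})$ is \emph{not} compact --- radial bumps concentrated on spheres of larger and larger radius, suitably normalised, converge weakly to $0$ but keep unit $L^{2}$-norm --- so uniform smallness of the whole $L^{2}$-tails is hopeless; what saves the day for $q>2$ is precisely the radial ``spreading'' quantified above, fed into the vanishing lemma. (When $s>\tfrac12$ one could instead establish a fractional Strauss-type pointwise bound $|u(x)|\le C|x|^{-(N-1)/2}\|u\|_{H^{s}(\R^{N})}$ for radial $u$ and $|x|\ge1$, and then argue as in the local case $s=1$; however this pointwise decay genuinely fails for $s\le\tfrac12$, since a radial $H^{s}$ function may be unbounded on a sphere of positive radius, so the vanishing-lemma route is the robust one.)
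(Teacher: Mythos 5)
The paper does not prove this theorem; it is quoted directly from Lions' article \cite{Lions}, so there is no in-paper argument to compare against. Your proposed proof is correct and follows the standard modern route to radial compactness. The geometric inequality
\[
\int_{B_R(y)}|v|^2\,dx\;\le\;C_N\Big(\frac{R}{|y|}\Big)^{N-1}\|v\|_{L^2(\R^N)}^2 \qquad (v \text{ radial},\ |y|\ge 2R)
\]
is right: in polar coordinates the contributing shells have $r\in[|y|-R,|y|+R]$, the cap $\{|x|=r\}\cap B_R(y)$ has angular radius $\lesssim R/|y|$ and hence $(N-1)$-measure $\lesssim r^{N-1}(R/|y|)^{N-1}\lesssim R^{N-1}$, while $r^{N-1}\ge(|y|/2)^{N-1}$ bounds the Jacobian weight in $\|v\|_{L^2}^2$ from below. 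Combining this with local compactness from Theorem \ref{Sembedding} does give
\[
\lim_{n\to\infty}\sup_{y\in\R^N}\int_{B_R(y)}|u_n|^2\,dx=0,
\]
and the fractional Lions vanishing lemma then concludes. Two remarks. First, that vanishing lemma is itself a nontrivial ingredient: you are right to invoke it as a black box, but a fully self-contained proof would need to establish it (it is obtained by a covering argument plus interpolation between $L^2$ and $L^{2^*_s}$ on balls of radius $R$, controlled via \eqref{FSI}; this is exactly where the strict inequalities $2<q<2^*_s$ enter, and it is available for fractional spaces, see e.g.\ \cite{Secchi1}). Second, your side observations are on the mark: $N\ge 2$ is used through the positive exponent $N-1$ in the decay factor, compactness genuinely fails at $q=2$, and the vanishing-lemma route is the one that covers all $s\in(0,1)$, whereas a Strauss-type pointwise bound $|u(x)|\lesssim |x|^{-(N-1)/2}\|u\|_{H^s}$ only works for $s>\tfrac12$.
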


\noindent
Finally, it is worth recalling the following Lemma proved in \cite{CW} which, in some sense, substitutes the Strauss's compactness lemma:
\begin{lem}\cite{CW}\label{CW}
Let $(X, \|\cdot\|)$ be a Banach space such that $X$ is embedded respectively continuously and compactly into $L^{q}(\R^{N})$ for $q\in [q_{1}, q_{2}]$ and $q\in (q_{1}, q_{2})$, where $q_{1}, q_{2}\in (0, \infty)$.
Assume that $(u_{k})\subset X$, $u: \R^{N} \rightarrow \R$ is a measurable function and $P\in C(\R, \R)$ is such that
\begin{compactenum}[(i)]
\item $\displaystyle{\lim_{|t|\rightarrow 0} \frac{P(t)}{|t|^{q_{1}}}=0}$, \\
\item $\displaystyle{\lim_{|t|\rightarrow \infty} \frac{P(t)}{|t|^{q_{2}}}=0}$,\\
\item $\displaystyle{\sup_{k\in \N} \|u_{k}\|_{X}<\infty}$,\\
\item $\displaystyle{\lim_{k \rightarrow \infty} P(u_{k}(x))=u(x)} \mbox{ for a.e. } x\in \R^{N}$.
\end{compactenum}
Then, up to a subsequence, we have
$$
\lim_{k\rightarrow \infty} \|P(u_{k})-u\|_{L^{1}(\R^{N})}=0.
$$
\end{lem}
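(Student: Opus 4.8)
This is a Strauss-type compactness lemma, and the natural plan is to combine the pointwise growth controls (i)--(ii) on $P$ with the compact embedding of $X$ in order to upgrade the a.e. convergence (iv) to convergence in $L^{1}$. First I would exploit compactness: since $\sup_{k}\|u_{k}\|_{X}<\infty$ by (iii) and $X$ embeds compactly into $L^{q}(\R^{N})$ for $q\in(q_{1},q_{2})$, fixing one such $q$ we may pass to a subsequence (not relabelled) with $u_{k}\to\bar u$ strongly in $L^{q}(\R^{N})$ and a.e.\ in $\R^{N}$, for some measurable $\bar u$. By continuity of $P$ this gives $P(u_{k})\to P(\bar u)$ a.e., so comparison with (iv) forces $u=P(\bar u)$ a.e. Note also that (i)--(ii) and continuity of $P$ yield a bound $|P(t)|\le C\left(|t|^{q_{1}}+|t|^{q_{2}}\right)$ for all $t\in\R$; since $X$ embeds continuously into $L^{q_{1}}(\R^{N})$ and $L^{q_{2}}(\R^{N})$, this gives $\sup_{k}\|P(u_{k})\|_{L^{1}}<\infty$, and Fatou's lemma shows $u\in L^{1}(\R^{N})$ (and $\bar u\in L^{q_{1}}(\R^{N})\cap L^{q_{2}}(\R^{N})$, again by Fatou).

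Next I would split $P$ according to the size of its argument. Given $\varepsilon>0$, use (i)--(ii) to pick $0<\delta<M$ with $|P(t)|\le\varepsilon|t|^{q_{1}}$ for $|t|\le\delta$ and $|P(t)|\le\varepsilon|t|^{q_{2}}$ for $|t|\ge M$, choose a continuous cut-off $\chi:\R\to[0,1]$ equal to $1$ on $\{\delta\le|t|\le M\}$ and to $0$ on $\{|t|\le\delta/2\}\cup\{|t|\ge 2M\}$, and write $P=R+S$ with $R:=\chi P$ and $S:=(1-\chi)P$. Then $R$ is continuous and bounded (it vanishes outside the compact set $\{\delta/2\le|t|\le 2M\}$ on which $P$ is bounded), while $S$ vanishes on $\{\delta\le|t|\le M\}$ and satisfies $|S(t)|\le\varepsilon|t|^{q_{1}}$ for $|t|\le\delta$ and $|S(t)|\le\varepsilon|t|^{q_{2}}$ for $|t|\ge M$; hence for any $v\in L^{q_{1}}(\R^{N})\cap L^{q_{2}}(\R^{N})$,
$$
\int_{\R^{N}}|S(v)|\,dx\le\varepsilon\left(\|v\|_{L^{q_{1}}(\R^{N})}^{q_{1}}+\|v\|_{L^{q_{2}}(\R^{N})}^{q_{2}}\right).
$$
In particular $\int_{\R^{N}}|S(u_{k})|\,dx\le C_{0}\varepsilon$ and $\int_{\R^{N}}|S(\bar u)|\,dx\le C_{0}\varepsilon$, with $C_{0}:=\sup_{k}\left(\|u_{k}\|_{L^{q_{1}}(\R^{N})}^{q_{1}}+\|u_{k}\|_{L^{q_{2}}(\R^{N})}^{q_{2}}\right)<\infty$ independent of $\varepsilon$.

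For the remaining piece I would prove $R(u_{k})\to R(\bar u)$ in $L^{1}(\R^{N})$ by Vitali's convergence theorem. Pointwise a.e.\ convergence is immediate from $u_{k}\to\bar u$ a.e.\ and continuity of $R$. For equi-integrability and tightness, note that $R(u_{k})-R(\bar u)$ is bounded by $2\|R\|_{\infty}$ and is supported where $|u_{k}|\ge\delta/2$ or $|\bar u|\ge\delta/2$, whence $|R(u_{k})-R(\bar u)|\le 2\|R\|_{\infty}(\delta/2)^{-q}\left(|u_{k}|^{q}+|\bar u|^{q}\right)$; since $u_{k}\to\bar u$ in $L^{q}(\R^{N})$, the family $\{|u_{k}|^{q}\}_{k}$ is equi-integrable and tight, hence so is $\{R(u_{k})-R(\bar u)\}_{k}$, and Vitali applies. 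Combining the three estimates, and using $R(\bar u)-u=R(\bar u)-P(\bar u)=-S(\bar u)$,
$$
\int_{\R^{N}}|P(u_{k})-u|\,dx\le\int_{\R^{N}}|S(u_{k})|\,dx+\int_{\R^{N}}|R(u_{k})-R(\bar u)|\,dx+\int_{\R^{N}}|S(\bar u)|\,dx,
$$
so $\limsup_{k\to\infty}\int_{\R^{N}}|P(u_{k})-u|\,dx\le 2C_{0}\varepsilon$, and letting $\varepsilon\to0$ concludes.

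The main obstacle, and essentially the only delicate point, is that the hypotheses do \emph{not} provide a.e.\ convergence of $u_{k}$ itself but only of $P(u_{k})$; the compact embedding has to be invoked precisely to recover a.e.\ (and strong $L^{q}$) convergence of a subsequence $u_{k}\to\bar u$, and thus to identify $u=P(\bar u)$. Once that is done, the separation into the ``tail in $t$'' part $S$, controlled purely by the growth assumptions (i)--(ii) together with the continuous embeddings, and the ``bounded middle'' part $R$, where compactness enters through Vitali to prevent loss of mass at infinity, is routine.
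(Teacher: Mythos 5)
Your argument is correct: the only genuinely delicate point -- that (iv) gives a.e.\ convergence of $P(u_{k})$ rather than of $u_{k}$, so the compact embedding must be used first to extract a subsequence with $u_{k}\to\bar u$ in $L^{q}$ and a.e.\ and to identify $u=P(\bar u)$ -- is handled properly, and the splitting of $P$ into a bounded compactly supported part (treated by Vitali) plus $\varepsilon$-small tails controlled by (i)--(ii) and the continuous embeddings into $L^{q_{1}}$ and $L^{q_{2}}$ closes the proof. Note that the paper itself does not prove this lemma but quotes it from \cite{CW}; your proof is the standard Strauss-type compactness argument used there, so it is essentially the same approach.
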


\section{Proof of Theorem \ref{thm1}}
\noindent
This section is dedicated to the proof of Theorem \ref{thm1}. Without loss of generality, we will assume that $b=1$ in $(g3)$.\\
In order to study weak solutions to (\ref{P}), we look for critical points of the following functional
$$
\I(u)= \T(u)-\V(u)
$$
for $u\in H^{s}(\R^{N})$, where 
$$
\mathcal{T}(u)=\frac{1}{2} [u]^{2}_{H^{s}(\R^{N})} \mbox{ and } \V(u)=\int_{\R^{N}} G(u) dx. 
$$
By Theorem \ref{Sembedding} and assumptions on $g$, it is clear that $\I$ is well defined, and that $\I\in C^{1}(H^{s}(\R^{N}))$.  \\
We begin proving the following 
\begin{lem}\label{lem1}
Let $M=\inf \left\{\T(u): \V(u)=1, u\in H^{s}(\R^{N})\right\}$. \\
Then $0<M<\frac{1}{2}(2^{*}_{s})^{\frac{N-2s}{N}}S_{*}$.
\end{lem}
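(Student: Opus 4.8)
The plan is to prove the two bounds separately. For the lower bound $M>0$, from $(g2)$, $(g3)$ (with $b=1$) and the continuity of $g$ one first extracts a constant $C_{0}>0$ with $G(t)\le \frac{C_{0}}{2^{*}_{s}}|t|^{2^{*}_{s}}$ for every $t\in\R$ (near $0$ the left-hand side is negative while the right-hand side is nonnegative; near $\pm\infty$ this is $(g3)$ after integration; on compact sets it follows from the boundedness of $G$). Then, for any $u$ with $\V(u)=1$, the fractional Sobolev inequality $(\ref{FSI})$ gives
\begin{equation*}
1=\V(u)\le \frac{C_{0}}{2^{*}_{s}}\,\|u\|_{L^{2^{*}_{s}}(\R^{N})}^{2^{*}_{s}}\le \frac{C_{0}}{2^{*}_{s}}\,\bigl(2S_{*}\,\T(u)\bigr)^{\frac{2^{*}_{s}}{2}},
\end{equation*}
so $\T(u)$ is bounded below by a positive constant depending only on $N,s,C_{0},S_{*}$; passing to the infimum yields $M>0$.

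For the upper bound the starting remark is that $\frac12(2^{*}_{s})^{\frac{N-2s}{N}}S_{*}$ is exactly the value of the constrained infimum $\inf\{\T(u):\int_{\R^{N}}\frac{1}{2^{*}_{s}}|u|^{2^{*}_{s}}\,dx=1\}$ attached to the pure critical ``nonlinearity'', which by $(\ref{FSI})$ and the explicit form of the extremals recalled in the Remark after Theorem~\ref{Sembedding} is \emph{not} attained in $H^{s}(\R^{N})$; so we must strictly undercut it. To this end I would run the classical Brezis--Nirenberg truncation argument: fix a radial $\varphi\in C_{c}^{\infty}(\R^{N})$ with $\varphi\equiv1$ near the origin, let $U(x)=(1+|x|^{2})^{-\frac{N-2s}{2}}$ be a multiple of a Sobolev extremal, put $U_{\epsilon}(x)=\epsilon^{-\frac{N-2s}{2}}U(x/\epsilon)$ and $u_{\epsilon}=\varphi U_{\epsilon}\in H^{s}(\R^{N})$. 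Since for any $u$ with $\V(u)>0$ the dilation $u_{\lambda}:=u(\cdot/\lambda)$ satisfies $\T(u_{\lambda})=\lambda^{N-2s}\T(u)$ and $\V(u_{\lambda})=\lambda^{N}\V(u)$, the choice $\lambda=\V(u)^{-1/N}$ produces an admissible competitor, whence
\begin{equation*}
M\le \frac{\T(u_{\epsilon})}{\V(u_{\epsilon})^{\frac{N-2s}{N}}}\qquad\text{provided }\V(u_{\epsilon})>0 .
\end{equation*}

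It then remains to insert the by now standard asymptotics of the truncated extremals as $\epsilon\to0^{+}$, namely $\T(u_{\epsilon})=\T(U_{\epsilon})+O(\epsilon^{N-2s})$, $\|u_{\epsilon}\|_{L^{2^{*}_{s}}}^{2^{*}_{s}}=1+O(\epsilon^{N})$, $\|u_{\epsilon}\|_{L^{q}}^{q}=K_{q}\,\epsilon^{\,N-\frac{q(N-2s)}{2}}+o(\epsilon^{\,N-\frac{q(N-2s)}{2}})$ with $K_{q}>0$, and $\|u_{\epsilon}\|_{L^{2}}^{2}=O(\epsilon^{\min\{2s,N-2s\}})$, with an extra $|\log\epsilon|$ factor in the borderline case $N=4s$; a careful point here is that the order of $\|u_{\epsilon}\|_{L^{2}}^{2}$ changes with the dimension. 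Integrating $(g4)$ gives $G(t)\ge \frac{1}{2^{*}_{s}}|t|^{2^{*}_{s}}-\frac{a}{2}t^{2}+\frac{C}{q}|t|^{q}$ for all $t$, so (using $u_{\epsilon}\ge0$)
\begin{equation*}
\V(u_{\epsilon})\ge \frac{1}{2^{*}_{s}}+\frac{C}{q}\,\|u_{\epsilon}\|_{L^{q}}^{q}-\frac{a}{2}\,\|u_{\epsilon}\|_{L^{2}}^{2}+O(\epsilon^{N}).
\end{equation*}
The hypotheses $\max\{2,\frac{4s}{N-2s}\}<q<2^{*}_{s}$ are used exactly here: $q<2^{*}_{s}$ makes $N-\frac{q(N-2s)}{2}$ positive, while $q>2$ and $q>\frac{4s}{N-2s}$ force $N-\frac{q(N-2s)}{2}<\min\{2s,N-2s\}$, so for small $\epsilon$ the positive $L^{q}$-term dominates the $L^{2}$-loss, giving $\V(u_{\epsilon})\ge \frac{1}{2^{*}_{s}}+c\,\epsilon^{\,N-\frac{q(N-2s)}{2}}>0$. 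Plugging this together with $\T(u_{\epsilon})=\T(U_{\epsilon})+O(\epsilon^{N-2s})$ into the bound for $M$ and expanding, the gain of order $\epsilon^{\,N-\frac{q(N-2s)}{2}}$ in the denominator beats the error of order $\epsilon^{N-2s}$ in the numerator (precisely because $N-\frac{q(N-2s)}{2}<N-2s$), so $M<\frac12(2^{*}_{s})^{\frac{N-2s}{N}}S_{*}$ once $\epsilon$ is small enough. The main obstacle is exactly this final comparison: establishing the sharp, dimension-dependent expansions for the truncated fractional Sobolev extremals and then doing the bookkeeping that shows the lower-order gain supplied by $(g4)$ outweighs both the truncation error and the (possibly logarithmic) $L^{2}$-loss.
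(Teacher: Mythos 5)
Your proposal follows essentially the same route as the paper: for $M>0$ you use $G(t)\le C_{0}|t|^{2^{*}_{s}}/2^{*}_{s}$ together with the fractional Sobolev inequality $(\ref{FSI})$, and for the strict upper bound you run the Brezis--Nirenberg truncated-extremal argument, integrate $(g4)$ to produce a gain $\sim\epsilon^{\,N-\frac{q(N-2s)}{2}}$ in $\V$, and observe that $q>\max\{2,\frac{4s}{N-2s}\}$ makes this gain beat both the $O(\epsilon^{N-2s})$ truncation error and the $L^{2}$-loss $O(\epsilon^{\min\{2s,N-2s\}})$, exactly as in the paper. The only superficial difference is that the paper first normalizes $v_{\epsilon}=\psi_{\epsilon}/\|\psi_{\epsilon}\|_{L^{2^{*}_{s}}}$ and then dilates to land on $\{\V=1\}$, while you work with the unnormalized $u_{\epsilon}$ and the scale-invariant ratio $\T(u_{\epsilon})/\V(u_{\epsilon})^{(N-2s)/N}$ (so your claim $\|u_{\epsilon}\|_{L^{2^{*}_{s}}}^{2^{*}_{s}}=1+O(\epsilon^{N})$ implicitly requires rescaling $U$ to unit $L^{2^{*}_{s}}$-norm); the underlying asymptotics and the role of the hypotheses are the same.
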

\begin{proof}
Firstly we prove that $\{u\in H^{s}(\R^{N}) : \V(u)=1\}$ is not empty.
Fix $\eta \in C^{\infty}_{0}(\R^{N})$ a cut-off function with support in $B_{2}$ and such that $0\leq \eta \leq 1$, and $\eta=1$ on $B_{1}$, where $B_{r}$ denotes the ball in $\R^{N}$ of center at origin and radius $r$.
For $\epsilon>0$, let us define $\psi_{\epsilon}(x)=\eta(x)U_{\epsilon}(x)$, where
$$
U_{\epsilon}(x)=\frac{\kappa \epsilon^{\frac{N-2s}{2}}}{(\epsilon^{2}+|x|^{2})^{\frac{N-2s}{2}}}
$$
is a solution to
$$
(-\Delta)^{s}u=S_{*}|u|^{2^{*}_{s}-2}u \mbox{ in } \R^{N}
$$
and $\kappa$ is a suitable positive constant depending only on $N$ and $s$.\\
Now we set
$$
v_{\epsilon}(x)=\frac{\psi_{\epsilon}}{\|\psi_{\epsilon}\|_{L^{2^{*}_{s}}(\R^{N})}}.
$$
By performing similar calculations to those in \cite{SV} (see Proposition $21$ and $22$), we can see that
\begin{align}\label{BNestimates}
\|\psi_{\epsilon}\|_{L^{2^{*}_{s}}(\R^{N})}^{2^{*}_{s}}=S_{*}^{\frac{N}{2s}}+O(\epsilon^{N}) \, \mbox{ and }\, [\psi_{\epsilon}]^{2}_{H^{s}(\R^{N})}\leq S_{*}^{\frac{N}{2s}}+O(\epsilon^{N-2s}),
\end{align}
so, in particular, we deduce
\begin{equation}\label{ve}
[v_{\epsilon}]^{2}_{H^{s}(\R^{N})}\leq S_{*}+O(\epsilon^{N-2s}).
\end{equation}
By using the assumption $(g4)$, we get $\displaystyle{\V(v_{\epsilon})\geq \frac{1}{2^{*}_{s}}+ \Gamma_{\epsilon}}$, where 
$$
\Gamma_{\epsilon}=\left(\frac{C}{q}\|v_{\epsilon}\|^{q}_{L^{q}(\R^{N})}-\frac{a}{2}\|v_{\epsilon}\|^{2}_{L^{2}(\R^{N})}\right).
$$
Our aim is to prove that
\begin{equation}\label{gammae} 
\lim_{\epsilon \rightarrow 0} \frac{\Gamma_{\epsilon}}{\epsilon^{N-2s}}=+\infty.
\end{equation}
Noting that 
\begin{align}
\|v_{\epsilon}\|^{q}_{L^{q}(\R^{N})}&\geq \frac{1}{\|\psi_{\epsilon}\|^{q}_{L^{2^{*}_{s}}(\R^{N})}} \int_{B_{1}} |U_{\epsilon}(x)|^{q} dx \nonumber \\
&=C_{1}(\epsilon) \epsilon^{N-\frac{(N-2s)q}{2}}\int_{0}^{\frac{1}{\epsilon}} \frac{r^{N-1}}{(1+r^{2})^{\frac{(N-2s)q}{2}}} dr
\end{align}
and
\begin{align}
\|v_{\epsilon}\|^{2}_{L^{2}(\R^{N})}&\leq \frac{1}{\|\psi_{\epsilon}\|^{2}_{L^{2^{*}_{s}}(\R^{N})}} \int_{B_{2}} |U_{\epsilon}(x)|^{2} dx \nonumber \\
&=C_{2}(\epsilon) \epsilon^{2s} \int_{0}^{\frac{2}{\epsilon}} \frac{r^{N-1}}{(1+r^{2})^{N-2s}} dr
\end{align}
where $\displaystyle{C_{1}(\epsilon)=\frac{\omega_{N-1} \kappa^{q}}{\|\psi_{\epsilon}\|^{q}_{L^{2^{*}_{s}}(\R^{N})}}} $ and $\displaystyle{C_{2}(\epsilon)=\frac{\omega_{N-1} \kappa^{2}}{\|\psi_{\epsilon}\|^{2}_{L^{2^{*}_{s}}(\R^{N})}}}$, we can infer that
\begin{align}\label{Ae}
\frac{\Gamma_{\epsilon}}{\epsilon^{N-2s}}&=\frac{1}{\epsilon^{N-2s}} \left[\frac{C}{q} \|v_{\epsilon}\|^{q}_{L^{q}(\R^{N})}-\frac{a}{2}\|v_{\epsilon}\|^{2}_{L^{2}(\R^{N})}\right] \nonumber \\
&\geq \frac{1}{\epsilon^{N-2s}} \left[ \frac{C}{q} C_{1}(\epsilon)\epsilon^{N-\frac{(N-2s)q}{2}}\int_{0}^{\frac{1}{\epsilon}} \frac{r^{N-1}}{(1+r^{2})^{\frac{(N-2s)q}{2}}} dr-\frac{a}{2} C_{2}(\epsilon) \epsilon^{2s} \int_{0}^{\frac{2}{\epsilon}} \frac{r^{N-1}}{(1+r^{2})^{N-2s}} dr \right] \nonumber \\
&=: \epsilon^{2s-\frac{(N-2s)q}{2}}  A(\epsilon)
\end{align}
with 
$$
A(\epsilon)=\frac{C}{q} C_{1}(\epsilon) \int_{0}^{\frac{1}{\epsilon}} \frac{r^{N-1}}{(1+r^{2})^{\frac{(N-2s)q}{2}}} dr-\frac{a}{2} C_{2}(\epsilon) \epsilon^{2s-N+\frac{(N-2s)q}{2}} \int_{0}^{\frac{2}{\epsilon}} \frac{r^{N-1}}{(1+r^{2})^{N-2s}} dr.  
$$
Let us observe that being $q>2$ if $N\geq 4s$, and $q>\frac{4s}{N-2s}$ if $N<4s$, follows that $2s-\frac{(N-2s)q}{2}<0$ for any $N>2s$. 
Therefore, in order to verify (\ref{gammae}), we are going to prove that
$$
\lim_{\epsilon \rightarrow 0} A(\epsilon)>0.
$$
By using $|\psi_{\epsilon}|\leq |U_{\epsilon}|$ and (\ref{BNestimates}), we can observe that there are $L_{1}, L_{2}>0$ such that
$$
\lim_{\epsilon \rightarrow 0} C_{1}(\epsilon)=L_{1} \mbox{ and } \lim_{\epsilon \rightarrow 0} C_{2}(\epsilon)=L_{2}.
$$
Since $q>\frac{4s}{N-2s}>\frac{N}{N-2s}$ if $N<4s$ and $q>2>\frac{N}{N-2s}$ if $N \geq 4s$, it is clear that
$$
\delta:=\int_{0}^{\infty} \frac{r^{N-1}}{(1+r^{2})^{\frac{(N-2s)q}{2}}} dr<\infty.
$$
Now we show that
$$
\lim_{\epsilon \rightarrow 0} \epsilon^{2s-N+\frac{(N-2s)q}{2}} \int_{0}^{\frac{2}{\epsilon}} \frac{r^{N-1}}{(1+r^{2})^{N-2s}} dr=0.
$$
Set
$$
C_{3}(\epsilon)=\epsilon^{2s-N+\frac{(N-2s)q}{2}} \int_{0}^{\frac{2}{\epsilon}} \frac{r^{N-1}}{(1+r^{2})^{N-2s}} dr.
$$
Fix $\epsilon \in (0, 2)$. 
Then, for $N>4s$ we have
$$
C_{3}(\epsilon)\leq \epsilon^{2s-N+\frac{(N-2s)q}{2}} \int_{0}^{1} \frac{r^{N-1}}{(1+r^{2})^{N-2s}} dr+\epsilon^{2s-N+\frac{(N-2s)q}{2}} \int_{1}^{\infty} \frac{1}{r^{N-4s+1}} dr \rightarrow 0,
$$
since $2s-N+\frac{(N-2s)q}{2}>0$ because of $q>2$.  \\
If $N=4s$ then
$$
C_{3}(\epsilon)\leq \epsilon^{s(q-2)} \int_{0}^{1} \frac{r^{4s-1}}{(1+r^{2})^{2s}} dr+\epsilon^{s(q-2)} \int_{1}^{\frac{2}{\epsilon}} \frac{1}{r} dr \rightarrow 0
$$
since $q>2$ and $\lim_{\epsilon \rightarrow 0^{+}} \epsilon^{s(q-2)}\log \epsilon=0$. \\
Finally, for $N<4s$
\begin{align*}
C_{3}(\epsilon)&\leq \epsilon^{2s-N+\frac{(N-2s)q}{2}} \int_{0}^{1} \frac{r^{N-1}}{(1+r^{2})^{N-2s}} dr \\
&+\epsilon^{2s-N+\frac{(N-2s)q}{2}} \int_{1}^{\frac{2}{\epsilon}} \frac{1}{r^{N-4s+1}} dr \\
&\leq \epsilon^{2s-N+\frac{(N-2s)q}{2}} \int_{0}^{1} \frac{r^{N-1}}{(1+r^{2})^{N-2s}} dr +\frac{2^{-N+4s}}{-N+4s} \epsilon^{-2s+\frac{(N-2s)q}{2}}  \rightarrow 0
\end{align*}
being $q>\frac{4s}{N-2s}$.

\noindent
Then 
$$
\lim_{\epsilon \rightarrow 0} A(\epsilon)=\frac{C}{q}L_{1}\delta>0,
$$
and by using (\ref{Ae}) we deduce that (\ref{gammae}) is satisfied.
As a consequence there exists $\epsilon_{0}>0$ such that 
$$
\V(v_{\epsilon})\geq \frac{1}{2^{*}_{s}} \mbox{ for all } 0<\epsilon<\epsilon_{0}.
$$
Set $\displaystyle{\omega(x)=v_{\epsilon}\left(\frac{x}{\sigma}\right)}$, where $\sigma=(\V(v_{\epsilon}))^{-\frac{1}{N}}$. Then $\V(\omega)=1$ so we get $\{u\in H^{s}(\R^{N}): \V(u)=1\}$ is not empty.\\
Now we show that $0<M<\frac{1}{2}(2^{*}_{s})^{\frac{N-2s}{N}}S_{*}$. Clearly $0\leq M<\infty$. \\
Since $\T(v_{\epsilon})=\sigma^{N-2s}\T(\omega)$ and $\V(\omega)=1$ we deduce that 
$$
M\leq \T(\omega)=\frac{\T(v_{\epsilon})}{(\V(v_{\epsilon}))^{\frac{2}{2^{*}_{s}}}}.
$$
Then, by (\ref{ve}), we can see that for any $0<\epsilon<\epsilon_{0}$ 
\begin{align}\label{mgamma}
0\leq M\leq &\frac{\frac{1}{2}[v_{\epsilon}]^{2}_{H^{s}(\R^{N})}}{\Bigl(\frac{1}{2^{*}_{s}}+\frac{C}{q}\|v_{\epsilon}\|^{q}_{L^{q}(\R^{N})}-\frac{a}{2}\|v_{\epsilon}\|^{2}_{L^{2}(\R^{N})} \Bigr)^{\frac{2}{2^{*}_{s}}}} \nonumber \\
&\leq \frac{1}{2} (2^{*}_{s})^{\frac{2}{2^{*}_{s}}} S_{*} \frac{1+O(\epsilon^{N-2s})}{(1+2^{*}_{s} \Gamma_{\epsilon})^{\frac{2}{2^{*}_{s}}}}.
\end{align}
Noting that for $p\geq 1$ it holds
$$
(1+y)^{p}\leq 1+p(1+y)^{p+1}y \mbox{ for all } y\geq -1,
$$
by (\ref{gammae}) we deduce that for all $\epsilon$ sufficiently small
$$
(1+O(\epsilon^{N-2s}))^{\frac{2^{*}_{s}}{2}}-1\leq \frac{2^{*}_{s}}{2} (1+O(\epsilon^{N-2s}))^{1+\frac{2^{*}_{s}}{2}}O(\epsilon^{N-2s})<2^{*}_{s} \Gamma_{\epsilon}
$$
that is 
\begin{equation}\label{ogamma} 
(1+O(\epsilon^{N-2s}))^{\frac{2^{*}_{s}}{2}}<1+2^{*}_{s} \Gamma_{\epsilon}.
\end{equation}
Hence, taking into account (\ref{mgamma}) and (\ref{ogamma}) we obtain 
$$
M<\frac{1}{2}(2^{*}_{s})^{\frac{N-2s}{N}}S_{*}.
$$
Finally we prove that $M>0$. We argue by contradiction, and suppose that $M=0$. Then we can find a sequence $(u_{n})\subset H^{s}(\R^{N})$ such that $\V(u_{n})=1$ and $\T(u_{n})\rightarrow 0$ as $n \rightarrow \infty$. By using (\ref{FSI}) we can see that  $\lim_{n \rightarrow \infty} \|u_{n}\|_{L^{2^{*}_{s}}(\R^{N})}=0$. 
Now, by using $(g2)$ and $(g3)$, we know that there is a constant $K>0$ such that $G(t)\leq K |t|^{2^{*}_{s}}$ for all $t \in \R$, so we deduce that $1=\V(u_{n})\leq K \|u_{n}\|^{2^{*}_{s}}_{L^{2^{*}_{s}}(\R^{N})}\rightarrow 0$ as $n \rightarrow \infty$, which gives a contradiction.
\end{proof}

\noindent
Taking into account the previous result, we obtain
\begin{lem}\label{lem2}
Under the assumptions of Theorem \ref{thm1}, there exists a minimizer $u\in H^{s}(\R^{N})$ to the problem
\begin{equation}\label{minpb}
\inf \left\{\T(u): \V(u)=1, u\in H^{s}(\R^{N})\right\}.
\end{equation}
Moreover $u$ is positive and radially symmetric. 
\end{lem}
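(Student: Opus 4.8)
The plan is to run the direct method on a symmetrized minimizing sequence, using Lemma~\ref{CW} to pass to the limit in the subcritical part of $\V$ and the strict upper bound of Lemma~\ref{lem1} to prevent concentration in the critical part. I would start from $(u_n)\subset H^s(\R^N)$ with $\V(u_n)=1$, $\T(u_n)\to M$, and replace each $u_n$ by its Schwarz symmetrization: since $g$ is odd the primitive $G$ is even, so $\V$ is unchanged, while the fractional P\'olya--Szeg\H{o} inequality only decreases $\T$; hence one may assume $u_n\ge0$, radial and nonincreasing. To bound $(u_n)$ in $H^s(\R^N)$, use that $(g2)$ gives $\delta>0$ with $G(t)\le-\tfrac a4 t^2$ on $[0,\delta]$, while $(g2)$--$(g3)$ give $G(t)\le K|t|^{2^*_s}$ on $\R$ (as at the end of the proof of Lemma~\ref{lem1}); splitting $1=\V(u_n)$ over $\{u_n\le\delta\}$ and $\{u_n>\delta\}$, the first region is controlled by $\|u_n\|_{L^{2^*_s}}^2\le S_*[u_n]^2=2S_*\T(u_n)$ and the second by $\delta^{2-2^*_s}\|u_n\|_{L^{2^*_s}}^{2^*_s}$, so $\|u_n\|_{L^2}$ stays bounded. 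Thus, up to a subsequence, $u_n\rightharpoonup u$ in $H^s_r(\R^N)$, $u_n\to u$ in $L^q(\R^N)$ for every $q\in(2,2^*_s)$ by Theorem~\ref{Lions}, and $u_n\to u$ a.e.

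Next I would split off the critical part of $\V$ by setting $H(t):=G(t)-\tfrac1{2^*_s}|t|^{2^*_s}+\tfrac a2 t^2$. By $(g1)$--$(g2)$, $H(t)=O(|t|^{2+\beta})$ as $t\to0$ (in particular $H(t)/t^2\to0$); by $(g3)$, $H(t)/|t|^{2^*_s}\to0$ as $|t|\to\infty$; by $(g4)$, $H(t)\ge\tfrac Cq|t|^q\ge0$; and $H$ is even. Lemma~\ref{CW} with $X=H^s_r(\R^N)$, $q_1=2$, $q_2=2^*_s$, $P=H$ then gives $\int_{\R^N}H(u_n)\to\int_{\R^N}H(u)$, so that
\begin{equation*}
1=\V(u_n)=\tfrac1{2^*_s}\|u_n\|_{L^{2^*_s}}^{2^*_s}-\tfrac a2\|u_n\|_{L^2}^2+\int_{\R^N}H(u_n),\qquad\int_{\R^N}H(u_n)\longrightarrow\int_{\R^N}H(u).
\end{equation*}

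The core of the argument is to exclude any loss of mass. Writing $v_n:=u_n-u\rightharpoonup0$ and combining the Hilbert-space splittings of $[\cdot]^2$ and $\|\cdot\|_{L^2}^2$ with the Brezis--Lieb lemma for $\|\cdot\|_{L^{2^*_s}}^{2^*_s}$, one obtains $2M=[u]^2+\lim[v_n]^2$ and $1=\V(u)+\tfrac1{2^*_s}\lim\|v_n\|_{L^{2^*_s}}^{2^*_s}-\tfrac a2\lim\|v_n\|_{L^2}^2$, while $\T(u)\le M$ by weak lower semicontinuity. A nonzero $L^2$-defect (i.e.\ $\lim[v_n]^2=0$ but $\lim\|v_n\|_{L^2}^2>0$) would force $\V(u)>1$ with $\T(u)=M$, and rescaling $u$ to the constraint set $\{\V=1\}$ would lower $\T$ strictly below $M$, a contradiction. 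A nonzero Sobolev-defect ($\lim[v_n]^2>0$, which in particular covers $u=0$) is ruled out by feeding $\|v_n\|_{L^{2^*_s}}^2\le S_*[v_n]^2$ and, when $\V(u)>0$, the rescaling inequality $\T(u)\ge\V(u)^{\frac{N-2s}{N}}M$ into the strict bound $M<\tfrac12(2^*_s)^{\frac{N-2s}{N}}S_*$ of Lemma~\ref{lem1}, exactly as in the Brezis--Nirenberg problem. Hence $v_n\to0$ in $H^s(\R^N)$, so $[u]^2=2M$ and $\V(u)=1$, i.e.\ $\T(u)=M$ and $u$ minimizes \eqref{minpb}. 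Since each $u_n\ge0$ is radial and these properties persist under weak $L^2$-limits, $u\ge0$ is radial; by the Lagrange multiplier rule $u$ weakly solves $(-\Delta)^su=\mu g(u)$ for some $\mu\in\R$, and the Pohozaev identity (as in \cite{CW}) gives $\mu=\tfrac{N-2s}{N}M>0$, so, $g(t)/t$ being bounded below for $t>0$ by $(g2)$--$(g3)$, $u$ is a nontrivial nonnegative supersolution of a shifted fractional equation and $u>0$ by the strong maximum principle.

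I expect the main obstacle to be exactly the exclusion of concentration in the critical term $\|u_n\|_{L^{2^*_s}}^{2^*_s}$: this is where the criticality is felt, and the only available tool is the strict gap $M<\tfrac12(2^*_s)^{\frac{N-2s}{N}}S_*$ of Lemma~\ref{lem1} (which is in turn the sole purpose of assumption $(g4)$).
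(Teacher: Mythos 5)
Your proposal is correct and follows essentially the same strategy as the paper: symmetrize the minimizing sequence by the fractional P\'olya--Szeg\H{o} inequality, get $H^s$-boundedness from $(g2)$--$(g3)$, split $G$ into the critical and quadratic pieces plus a subcritical remainder treated by Lemma~\ref{CW}, apply Brezis--Lieb and the Hilbert splitting to $v_n=u_n-u$, and rule out loss of compactness by combining the dilation inequality $\T(w)\ge M\,\V(w)^{(N-2s)/N}$ with the strict gap of Lemma~\ref{lem1}. Your closing strong-maximum-principle step (via the Lagrange multiplier $\mu=\tfrac{N-2s}{N}M>0$ read off the Pohozaev identity) is a sensible way to upgrade the nonnegativity the symmetrization gives to the strict positivity claimed in the statement, a point the paper in fact glosses over.
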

\begin{proof}
By Lemma \ref{lem1} we know that $\{\T(u): \V(u)=1, u\in H^{s}(\R^{N})\}$ is not empty.
Now, we show the existence of a radial minimizing sequence.
Let $(u_{n})\subset H^{s}(\R^{N})$ be a sequence such that $\V(u_{n})=1$ for all $n \in \N$, and $\T(u_{n}) \rightarrow M$ as $n \rightarrow \infty$. \\
Since we know (see \cite{Park}) that
$$
[u^{*}]_{H^{s}(\R^{N})}\leq [u]_{H^{s}(\R^{N})} \mbox{ for all } u\in H^{s}(\R^{N}), 
$$
where $u^{*}$ is the symmetric rearrangement of $|u|$, we can suppose that $u_{n}$ belongs to $H^{s}_{r}(\R^{N})$ and that $u_{n}$ is positive.\\
By using (\ref{FSI}) we can see that $(u_{n})$ is bounded in $L^{2^{*}_{s}}(\R^{N})$.\\
From $(g2)$ and $(g3)$, we know that there exists $K>0$ such that 
\begin{equation}\label{Gbound}
G(t)\leq K|t|^{2^{*}}-\frac{a}{4}t^{2} \mbox{ for all } t\in \R.
\end{equation}
Then, putting together $\V(u_{n})=1$, (\ref{Gbound}) and $(u_{n})$ is bounded in $L^{2^{*}_{s}}(\R^{N})$, we get $(u_{n})$ is bounded in $L^{2}(\R^{N})$. As a consequence, $(u_{n})$ is bounded in $H^{s}(\R^{N})$, so $u_{n} \rightharpoonup u$ in $H^{s}(\R^{N})$, and by Theorem \ref{Lions}
we have $u_{n} \rightarrow u$ in $L^{q}(\R^{N})$ and a.e. in $\R^{N}$.\\
Let $v_{n}=u_{n}-u$. Then, the weak convergence in $H^{s}(\R^{N})$ (which is a Hilbert space), implies that as $n \rightarrow \infty$
\begin{equation}\label{Tun}
\T(u_{n})=\T(v_{n})+\T(u)+o(1),
\end{equation}
while the Brezis-Lieb Lemma \cite{BL} yields
\begin{equation}\label{un1}
\|u_{n}\|_{L^{2^{*}_{s}}(\R^{N})}^{2^{*}_{s}}=\|v_{n}\|_{L^{2^{*}_{s}}(\R^{N})}^{2^{*}_{s}}+\|u\|_{L^{2^{*}_{s}}(\R^{N})}^{2^{*}_{s}}+o(1)
\end{equation}
and
\begin{equation}\label{un2}
\|u_{n}\|_{L^{2}(\R^{N})}^{2}=\|v_{n}\|_{L^{2}(\R^{N})}^{2}+\|u\|_{L^{2}(\R^{N})}^{2}+o(1),
\end{equation}
as $n \rightarrow \infty$.
Let $f(t)=g(t)-t^{{2^{*}_{s}}-1}+at$ and $\displaystyle{F(t)=\int_{0}^{t} f(\xi) d\xi}$. 
By using $(g2)$ and $(g3)$ we know that
\begin{align*}
\lim_{|t| \rightarrow 0} \frac{F(t)}{|t|^{2}}=0 \, \mbox{ and }\, \lim_{|t|\rightarrow \infty} \frac{F(t)}{|t|^{2^{*}_{s}}}=0,
\end{align*}
and by the boundedness of $(u_{n})$ in $H^{s}(\R^{N})$ follows that we can apply Lemma \ref{CW} to deduce that as $n \rightarrow \infty$
\begin{equation}\label{intF}
\int_{\R^{N}} F(v_{n}) dx=o(1) \mbox{ and } \int_{\R^{N}} F(u_{n}) dx=\int_{\R^{N}} F(u) dx+o(1). 
\end{equation}
Putting together (\ref{un1}), (\ref{un2}) and (\ref{intF}) we get
\begin{equation}\label{Vun}
\V(u_{n}) = \V(v_{n}) + \V(u) + o(1) \mbox{ as } n\rightarrow \infty. 
\end{equation}
Set $\tau_{n}=\T(v_{n})$, $\tau=\T(u)$, $\nu_{n}=\V(v_{n})$  and $\nu=\V(u)$.
Then, with this notation, (\ref{Tun}) and (\ref{Vun}) read
\begin{equation}\label{taunu}
\tau_{n}=M-\tau+o(1) \, \mbox{ and } \, \nu_{n}=1-\nu+o(1).
\end{equation}
In order to prove the existence of a minimizer of (\ref{minpb}), it is enough to prove that $\nu=1$. \\
Firstly, let us observe that, if $u_{\sigma}(x)=u(\frac{x}{\sigma})$, then $\T(u_{\sigma})=\sigma^{N-2s} \T(u)$ and $\V(u_{\sigma})=\sigma^{N} \V(u)$, so we have
\begin{equation}\label{TMV}
\T(u)\geq M(\V(u))^{\frac{N-2s}{N}},
\end{equation}
for all $u\in H^{s}(\R^{N})$ and $\V(u)\geq 0$.\\
If by contradiction $\nu>1$, then $\tau\geq M\nu^{\frac{N-2s}{N}}>M$, which is in contrast with the fact that $\tau\leq M$. Then $\nu\leq 1$. \\
If $\nu<0$, then $\nu_{n}>1-\frac{\nu}{2}>1$ for $n$ sufficiently large. By using (\ref{TMV}), we can see that
$$
\tau_{n}\geq M\nu_{n}^{\frac{N-2s}{N}}>M\Bigl(1-\frac{\nu}{2} \Bigr)^{\frac{N-2s}{N}}
$$
for $n$ sufficiently large, and this gives a contradiction since $\tau_{n}\leq M+o(1)$ by (\ref{taunu}). Then we have $\nu\in [0, 1]$.\\
If by contradiction $\nu\in [0, 1)$, then $\nu_{n}>0$ for  all $n$ big enough. By (\ref{TMV}) we have 
$$
\tau_{n}\geq M\nu_{n}^{\frac{N-2s}{N}} \mbox{ and } \tau\geq M\nu^{\frac{N-2s}{N}}.
$$
This yields
\begin{align*}
M&=\lim_{n \rightarrow \infty} \tau+\tau_{n}\\
&\geq \lim_{n \rightarrow \infty} M \left [\nu^{\frac{N-2s}{N}}+\nu_{n}^{\frac{N-2s}{N}}\right] \\
&=M \left[\nu^{\frac{N-2s}{N}}+(1-\nu)^{\frac{N-2s}{N}}\right] \\
&\geq M[\nu+1-\nu]=M.
\end{align*}
As a consequence, $\nu^{\frac{N-2s}{N}}+(1-\nu)^{\frac{N-2s}{N}}=1$, and by using the fact that $\nu\in [0, 1)$, we get $\nu=0$. Then $u=0$ and $\tau_{n} \rightarrow M$ as $n \rightarrow \infty$.\\
Moreover, by (\ref{taunu}), $\nu_{n} \rightarrow 1$ as $n \rightarrow \infty$, and by  the definition of $F$ and (\ref{intF}) we can infer that 
$$
\|v_{n}\|_{L^{2^{*}_{s}}(\R^{N})}^{2^{*}_{s}}=2^{*}_{s}+\frac{2^{*}_{s}}{2}  a\|v_{n}\|_{L^{2}(\R^{N})}^{2}+o(1),
$$
which implies that
$$
\limsup_{n \rightarrow \infty} \|v_{n}\|^{2}_{L^{2^{*}_{s}}(\R^{N})}\geq (2^{*}_{s})^{\frac{2}{2^{*}_{s}}}.
$$
Therefore, recalling that $\displaystyle{S_{*}=\inf_{u\in H^{s}(\R^{N})\setminus \{0\}} \frac{[u]^{2}_{H^{s}(\R^{N})}}{\|u\|^{2}_{L^{2^{*}_{s}}(\R^{N})}}}$, we get
\begin{align*}
M&=\frac{1}{2} \lim_{n \rightarrow \infty} [v_{n}]^{2}_{H^{s}(\R^{N})}\\
&\geq \frac{1}{2}(2^{*}_{s})^{\frac{2}{2^{*}_{s}}} \liminf_{n \rightarrow \infty} \frac{[v_{n}]^{2}_{H^{s}(\R^{N})}}{\|v_{n}\|^{2}_{L^{2^{*}_{s}}(\R^{N})}}\\
&\geq \frac{1}{2}(2^{*}_{s})^{\frac{2}{2^{*}_{s}}} S_{*},
\end{align*}
which gives a contradiction by Lemma \ref{lem1}. \\
Then, we have proved that $\nu=1$, that is $u\in H^{s}_{r}(\R^{N})$ is a positive minimizer of (\ref{minpb}).
\end{proof}

\noindent
Before giving the proof of Theorem \ref{thm1}, we recall the following Pohozaev identity:
\begin{thm}
Let $g: \R\rightarrow \R$ be a function satisfying $(g1), (g2)$ and $(g3)$, and $u\in H^{s}(\R^{N})$ be a weak solution to $(-\Delta)^{s}u=g(u)$ in $\R^{N}$.
Then $u$ satisfies the Pohozaev identity
\begin{equation}\label{Pohid}
\frac{N-2s}{2}[u]_{H^{s}(\R^{N})}^{2}=N\int_{\R^{N}} G(u) dx.
\end{equation}
\end{thm}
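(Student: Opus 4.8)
\emph{Strategy.} The plan is to realize $(-\Delta)^{s}$ via the Caffarelli--Silvestre extension \cite{CS}, reducing \eqref{P} to a local degenerate elliptic problem in the half--space $\R^{N+1}_{+}$, and then to run a Rellich--Pohozaev argument (multiplication by the dilation vector field) on half--balls, letting their radius diverge. The unavoidable preliminary step is to upgrade the weak solution $u$ to a regular, decaying one. From $(g2)$ and $(g3)$ one has $|g(t)|\le C(|t|+|t|^{2^{*}_{s}-1})$, so a Moser--De Giorgi iteration as in \cite{FQT} gives $u\in L^{\infty}(\R^{N})$; combining this with the $C^{1,\beta}$ hypothesis in $(g1)$ and the Schauder theory for $(-\Delta)^{s}$ yields $u\in C^{2s+\beta}_{loc}(\R^{N})$, i.e. $u$ is a classical solution. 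Finally, since $g(t)/t\to -a<0$ as $t\to 0$, the comparison arguments of \cite{FQT} give the decay $|u(x)|\le C(1+|x|)^{-(N+2s)}$, together with a corresponding decay of $\nabla u$.

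\emph{Extension.} Let $U=U(x,y)$ be the $s$--harmonic extension of $u$ to $\R^{N+1}_{+}$, i.e. the solution of $\dive(y^{1-2s}\nabla U)=0$ in $\R^{N+1}_{+}$ with $U(\cdot,0)=u$; it satisfies the Neumann--type condition $-\kappa_{s}\lim_{y\to0^{+}}y^{1-2s}\partial_{y}U(\cdot,y)=g(u)$ on $\R^{N}\times\{0\}$ and $\kappa_{s}\int_{\R^{N+1}_{+}}y^{1-2s}|\nabla U|^{2}\,dx\,dy=[u]^{2}_{H^{s}(\R^{N})}$, for a suitable dimensional constant $\kappa_{s}>0$. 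The strict inequality $\beta>1-2s$ in $(g1)$ is precisely what guarantees that $y^{1-2s}\partial_{y}U$ and $\nabla_{x}U$ extend continuously up to $\{y=0\}$ (so $U$ is $C^{1,\alpha}$ near that boundary), which makes the integrations by parts below licit; moreover the decay of $u$ and $\nabla u$ propagates to a decay of $U$, $\nabla_{x}U$ and $y^{1-2s}\partial_{y}U$ on $\R^{N+1}_{+}$.

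\emph{The identity.} I would multiply $\dive(y^{1-2s}\nabla U)=0$ by the dilation field $X\cdot\nabla U$, with $X=(x,y)\in\R^{N+1}_{+}$, and integrate over $B^{+}_{R}=\{|X|<R,\ y>0\}$. Writing $y^{1-2s}\nabla U\cdot\nabla(X\cdot\nabla U)$ as a divergence plus lower order terms and invoking the divergence theorem, one obtains: a bulk term equal to $\tfrac{2s-N}{2}\int_{B^{+}_{R}}y^{1-2s}|\nabla U|^{2}$; surface integrals over the spherical cap $\{|X|=R,\ y>0\}$; and, since $X\cdot\nu=0$ on $\{y=0\}$, a flat--boundary contribution which by the Neumann condition equals (up to $\kappa_{s}$) $\int_{B_{R}}g(u)\,(x\cdot\nabla_{x}u)\,dx=\int_{B_{R}}x\cdot\nabla_{x}(G(u))\,dx$. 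An integration by parts in $x$ turns the last integral into $-N\int_{B_{R}}G(u)\,dx$ plus a term on $\partial B_{R}\cap\{y=0\}$. Letting $R\to\infty$, the decay estimates of the first step annihilate every integral carried by $\partial B_{R}$, and the surviving identity is $\tfrac{N-2s}{2}[u]^{2}_{H^{s}(\R^{N})}=N\int_{\R^{N}}G(u)\,dx$, which is \eqref{Pohid}.

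\emph{Main obstacle.} The delicate part is not the algebra of the Rellich identity but the boundary analysis at $y=0$ and at infinity: one has to justify differentiating the integrand up to the degenerate boundary and to show that all the spherical--cap integrals vanish as $R\to\infty$, which is exactly what the decay of $u,\nabla u,U,\nabla_{x}U$ and $y^{1-2s}\partial_{y}U$ is for. This is where the ``extra'' regularity $g\in C^{1,\beta}$ with $\beta>\max\{0,1-2s\}$ is spent; a merely $C^{1}$ nonlinearity, as in the Berestycki--Lions setting, is not known to be enough in the fractional framework. Alternatively one could bypass the extension and argue directly with the bilinear form of $(-\Delta)^{s}$ through a rescaling and cut--off computation in the spirit of Ros-Oton and Serra, but this demands the same regularity input and a comparable amount of work.
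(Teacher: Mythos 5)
Your proposal follows the same overall route as the paper: upgrade regularity of the weak solution, pass to the Caffarelli--Silvestre $s$-harmonic extension, multiply by the dilation field $X\cdot\nabla U$ with $X=(x,y)$, and run a Rellich--Pohozaev argument on half-balls. The genuine divergence is in how you send $R\to\infty$. You propose to first establish pointwise decay $|u(x)|\le C(1+|x|)^{-(N+2s)}$ together with gradient decay, and to use it to show that \emph{every} spherical-cap integral on $\{|X|=R,\,y>0\}$ tends to zero. The paper never proves such decay: it only uses that $G(u)\in L^{1}(\R^{N})$ and $\int_{\R^{N+1}_{+}}y^{1-2s}|\nabla U|^{2}\,dx\,dy<\infty$, and appeals to the elementary fact that if $f\ge 0$ is integrable on $(0,\infty)$ then $\liminf_{R\to\infty}Rf(R)=0$, so one may extract a \emph{sequence} $R_{n}\to\infty$ along which both cap contributions vanish; this is all that is needed. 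Your decay claim is plausible, but it is not a free input in the critical regime: the comparison arguments of Felmer--Quaas--Tan are stated for subcritical $g$, and to adapt them you would first need global (not merely local) $L^{\infty}$ boundedness and the vanishing of $u$ at infinity, neither of which is entirely automatic when $g(t)\sim t^{2^{*}_{s}-1}$. The sequence trick sidesteps all of this with what is already available, which is why the paper uses it. A second, smaller divergence: the paper shaves the domain to $D^{+}_{R,\delta}=\{y\ge\delta\}\cap B_{R}$, performs the integration by parts there, and then lets $\delta\to 0^{+}$; you instead invoke $C^{1,\alpha}$ regularity of $U$ up to the degenerate boundary (available because $\beta>1-2s$) to integrate by parts directly down to $y=0$. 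Both are legitimate and cost about the same. In sum, there is no wrong step in principle, but as written your argument rests on a decay lemma for critical $g$ that you have not established and that the paper deliberately avoids; you should either prove that lemma or replace it by the integrability/subsequence argument.
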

\begin{proof}
Let us observe that the proof of (\ref{Pohid}) has been established in \cite{CW} when the nonlinearity $g$ is subcritical.
However, we can see that the arguments in \cite{CW} work again in our case.  
For the reader's convenience we give a sketch of the proof. \\
Clearly, $u$ satisfies $(-\Delta)^{s}u=q(x)u$ in $\R^{N}$, where $q(x)=g(u(x))/u(x)\in L^{\frac{N}{2s}}_{loc}(\R^{N})$ in view of $(g2)$-$(g3)$.
Then, by using similar arguments as in \cite{BCPS}, we deduce that $u\in L^{\infty}_{loc}(\R^{N})$. Proceeding as in \cite{Cabsire, FQT} and by using $(g1)$, we can infer that $u\in C^{2}(\R^{N})$.
Now, transforming (\ref{P}) into a local problem via extension method \cite{CS}, we can see that the $s$-harmonic extension $v=E_{s}(u)\in C^{2}(\R^{N+1}_{+})$ of $u$ solves
\begin{equation}\label{Pext}
\left\{
\begin{array}{ll}
\dive(y^{1-2s} \nabla v)=0 & \mbox{ in } \R^{N+1}_{+} \\
-\lim_{y\rightarrow 0^{+}} y^{1-2s}\frac{\partial v}{\partial y}=\kappa_{s} g(v)  & \mbox{ on } \R^{N} \\
v(x, 0)=u(x) & \mbox{ on } \R^{N}
\end{array}
\right.
\end{equation}
and satisfies
\begin{equation}\label{extension}
\int_{\R^{N+1}_{+}} y^{1-2s}|\nabla v|^{2}dx dy=[u]_{H^{s}(\R^{N})}^{2}<\infty,
\end{equation}
where $\kappa_{s}$ is a positive constant (see \cite{CS}). Without loss of generality we may assume that $\kappa_{s}=1$.\\
For any $R>0$ and $\delta\in (0, R)$, we define 
$$
D_{R, \delta}^{+}=\{(x, y)\in \R^{N}\times [\delta, \infty): |(x, y)|\leq R\},
$$
and we denote by $\partial D^{1}_{R, \delta}=\{(x, y)\in \R^{N}\times \{y=\delta\}: |x|^{2}\leq R^{2}-\delta^{2}\}$ and 
 $\partial D^{2}_{R, \delta}=\{(x, y)\in \R^{N}\times [\delta, \infty): |(x, y)|= R \}$.\\
Multiplying the equation (\ref{Pext}) by $((x, y), \nabla v)$ and integrating on $D^{+}_{R, \delta}$ we get
\begin{align}\label{locPI}
0&=\int_{D_{R, \delta}^{+}} \dive(y^{1-2s} \nabla v) ((x, y), \nabla v) dx dy \nonumber\\
&=\int_{\partial D^{1}_{R, \delta}} y^{1-2s} \left[(x, \nabla_{x}v)(-v_{y})+\frac{y}{2}|\nabla v|^{2}\right] d\sigma \nonumber\\&+\int_{\partial D^{2}_{R, \delta}} y^{1-2s}\left[\frac{1}{R}|((x, y), \nabla v)|^{2}-\frac{R}{2}|\nabla v|^{2}\right] d\sigma +\frac{N-2s}{2}\int_{D_{R, \delta}^{+}} y^{1-2s}|\nabla v|^{2} dxdy \nonumber \\
&=:A_{1}(R, \delta)+A_{2}(R, \delta)+A_{3}(R, \delta).
\end{align}
By using the boundary condition in (\ref{Pext}) and the divergence theorem we can see that
\begin{equation}\label{gr1}
\lim_{\delta\rightarrow 0}\int_{\partial D^{1}_{R, \delta}} y^{1-2s} (x, \nabla_{x}v)(-v_{y}) d\sigma=R\int_{\partial B_{R}} G(v)dS-N\int_{B_{R}} G(v) dx
\end{equation}
where $B_{R}=\{(x, 0): |x|\leq R\}$.\\
Clearly
\begin{equation}\label{gr2}
\lim_{\delta\rightarrow 0} \int_{\partial D^{1}_{R, \delta}} y^{1-2s} y|\nabla v|^{2} d\sigma=0.
\end{equation}
Taking into account $G(v)\in L^{1}(\R^{N})$ and (\ref{extension}),
we can find a sequence $R_{n}\rightarrow \infty$ as $n\rightarrow \infty$ such that
\begin{equation}\label{gr3}
\lim_{n\rightarrow \infty} R_{n}\int_{\partial B_{R_{n}}^{+}} G(v)dS= 0,
\end{equation}
\begin{equation}\label{gr4}
\lim_{n\rightarrow \infty} A_{2}(R_{n}, \delta)=0 \mbox{ for all } \delta>0.
\end{equation}
Putting together (\ref{gr1})-(\ref{gr4}), we deduce that
\begin{align}\begin{split}\label{gr5}
&\lim_{n\rightarrow \infty}\lim_{\delta\rightarrow 0}A_{1}(R_{n}, \delta)=-N\int_{\R^{N}} G(v) dx \\
&\lim_{n\rightarrow \infty}\lim_{\delta\rightarrow 0}A_{2}(R_{n}, \delta)=0.
\end{split}\end{align} 
Then, by using (\ref{gr5}) and the fact that 
$$
\lim_{n\rightarrow \infty}\lim_{\delta\rightarrow 0}A_{3}(R_{n}, \delta)=\frac{N-2s}{2}\int_{\R^{N+1}_{+}} y^{1-2s}|\nabla v|^{2} dx dy,
$$
from (\ref{locPI}) and (\ref{extension}) we get (\ref{Pohid}).
\end{proof}

\noindent
Now we are able to prove the main result of this section:
\begin{proof}[Proof of Theorem \ref{thm1}]
Let us show that there exists a least energy solution $\omega(x)$ of (\ref{P}) 
and that $\displaystyle{m=\frac{s}{N}\Bigl(\frac{N-2s}{2N}\Bigr)^{\frac{N-2s}{2}}(2M)^{\frac{N}{2s}}}$, where $m$ is the least energy of (\ref{P}); see (\ref{LEL}).\\
Let us define the following sets
$$
\mathcal{S}=\{ v\in H^{s}(\R^{N}): \V(v)=1\}
$$
and 
$$
\mathcal{P}=\{v\in H^{s}(\R^{N})\setminus \{0\}:  \J(v)=0\}.
$$
where 
$$
\J(v)=\frac{1}{2}[v]^{2}_{H^{s}(\R^{N})}-\frac{N}{N-2s}\int_{\R^{N}} G(v) dx\in C^{1}(H^{s}(\R^{N}), \R). 
$$
Then, there exists a one-to-one correspondence $\Phi: \mathcal{S} \rightarrow \mathcal{P}$ between $\mathcal{S}$ and $\mathcal{P}$ defined by
$$
(\Phi(v))(x)=v\Bigl(\frac{x}{\tau_{u}}\Bigr)\, \mbox{ where } \tau_{u}=\Bigl(\frac{N-2s}{2N}\Bigr)^{\frac{1}{2s}}[v]^{\frac{1}{s}}_{H^{s}(\R^{N})}. 
$$
Let us notice that for any $v \in \mathcal{S}$
$$
\I(\Phi(v))=\tau_{u}^{N-2s}\T(v)-\tau^{N}_{u}\V(v)=\frac{s}{N}\Bigl(\frac{N-2s}{2N}\Bigr)^{\frac{N-2s}{2s}} [v]^{\frac{N}{s}}_{H^{s}(\R^{N})}.
$$
Then
$$
\inf_{v\in \mathcal{P}} \I(v)=\inf_{v \in \mathcal{P}} \I(\Phi(v))=\frac{s}{N}\Bigl(\frac{N-2s}{2N}\Bigr)^{\frac{N-2s}{2s}} \inf_{v\in \mathcal{S}} [v]^{\frac{N}{s}}_{H^{s}(\R^{N})}.
$$
By using Lemma \ref{lem2}, we know that $u \in \mathcal{S}$ and that 
$$
\inf_{v \in \mathcal{S}} [v]^{2}_{H^{s}(\R^{N})}=[u]^{2}_{H^{s}(\R^{N})}=2M.
$$
As a consequence  
$$
\inf_{v\in \mathcal{P}} \I(v)=\I(\Phi(u))=\frac{s}{N}\Bigl(\frac{N-2s}{2N}\Bigr)^{\frac{N-2s}{2s}}(2M)^{\frac{N}{2s}}.
$$ 
Let $\omega=\Phi(u)$. By Lagrange multipliers Theorem, there exists $\lambda\in \R$ such that 
$$
\langle \I'(\omega), \varphi \rangle=\lambda \langle \J'(\omega), \varphi \rangle \mbox{ for any } \varphi \in H^{s}(\R^{N}).
$$ 
This means that $\omega$ is a weak solution to
$$
(1-\lambda)(-\Delta)^{s}\omega=\Bigl(1-\lambda \frac{N}{N-2s}\Bigr)g(\omega) \mbox{ in } \R^{N},
$$
and in view of (\ref{Pohid}), $\omega$ satisfies the following Pohozaev identity
\begin{align}\label{P1}
(1-\lambda)\frac{N-2s}{2}[\omega]^{2}_{H^{s}(\R^{N})}=N\Bigl(1-\lambda \frac{N}{N-2s}\Bigr) \int_{\R^{N}} G(\omega) dx. 
\end{align}
Since $\omega=\Phi(u) \in \mathcal{P}$, we also know that
\begin{align}\label{P2}
\frac{N-2s}{2}[\omega]^{2}_{H^{s}(\R^{N})}=N \int_{\R^{N}} G(\omega) dx.
\end{align}
Putting together (\ref{P1}) and (\ref{P2}) we can see that 
$$
\lambda\Bigl(1-\frac{N}{N-2s}\Bigr)\int_{\R^{N}} G(\omega) dx=0,
$$
so we have $\lambda=0$ and $\I'(\omega)=0$. Then $\omega$ is a least energy solution to (\ref{P}), and $\displaystyle{m=\frac{s}{N}\Bigl(\frac{N-2s}{2N}\Bigr)^{\frac{N-2s}{2}}(2M)^{\frac{N}{2s}}}$.

\end{proof}

\section{Mountain pass characterization of least energy solutions}

\noindent
In this section we give the proof of Theorem \ref{thm2}.
Firstly we prove that
\begin{lem}\label{lem3}
$\I$ has a mountain pass geometry, that is:
\begin{compactenum}[(i)]
\item $\I(0)=0$;
\item There exist $\rho>0$, $\eta>0$ such that $\I(u)\geq \eta$ for all $\|u\|_{H^{s}(\R^{N})}=\rho$;
\item There exist $u_{0}\in H^{s}(\R^{N})$ such that $\|u_{0}\|_{H^{s}(\R^{N})}>\rho$ and $\I(u_{0
})<0$.
\end{compactenum}
Then
$$
c=\inf_{\gamma\in \Gamma} \max_{t\in [0, 1]} \I(\gamma(t))
$$
where
$$
\Gamma=\{\gamma\in C([0, 1], H^{s}(\R^{N})): \gamma(0)=0 \mbox{ and } \I(\gamma(1))<0\},
$$
is well defined.
\end{lem}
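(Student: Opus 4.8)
The plan is to verify the three items (i)--(iii) separately and then deduce that the minimax level $c$ is a well-defined finite positive number. Item (i) is immediate: since $G(0)=0$ we have $\I(0)=\T(0)-\V(0)=0$.

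For item (ii) I would start from the elementary pointwise estimate already recorded in \eqref{Gbound}: assumptions $(g2)$ and $(g3)$ produce $K>0$ with $G(t)\le K|t|^{2^{*}_{s}}-\tfrac{a}{4}t^{2}$ for all $t\in\R$. Integrating over $\R^{N}$ gives
\begin{align*}
\I(u) &\ge \tfrac12[u]^{2}_{H^{s}(\R^{N})}+\tfrac{a}{4}\|u\|^{2}_{L^{2}(\R^{N})}-K\|u\|^{2^{*}_{s}}_{L^{2^{*}_{s}}(\R^{N})}\\
&\ge c_{0}\|u\|^{2}_{H^{s}(\R^{N})}-K\|u\|^{2^{*}_{s}}_{L^{2^{*}_{s}}(\R^{N})},
\end{align*}
with $c_{0}=\min\{\tfrac12,\tfrac{a}{4}\}$. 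Estimating $\|u\|_{L^{2^{*}_{s}}(\R^{N})}\le C\|u\|_{H^{s}(\R^{N})}$ through the embedding \eqref{FSI} and using $2^{*}_{s}>2$, I obtain $\I(u)\ge\|u\|^{2}_{H^{s}(\R^{N})}\bigl(c_{0}-C'\|u\|^{2^{*}_{s}-2}_{H^{s}(\R^{N})}\bigr)$. Choosing $\rho>0$ so small that $C'\rho^{2^{*}_{s}-2}\le c_{0}/2$ then yields $\I(u)\ge\tfrac{c_{0}}{2}\rho^{2}=:\eta>0$ whenever $\|u\|_{H^{s}(\R^{N})}=\rho$ — and, in fact, $\I\ge0$ on the whole closed ball of radius $\rho$, a fact I will reuse below.

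For item (iii) I would use the dilation $v_{\sigma}(x)=v(x/\sigma)$, for which $\T(v_{\sigma})=\sigma^{N-2s}\T(v)$, $\V(v_{\sigma})=\sigma^{N}\V(v)$ and $\|v_{\sigma}\|^{2}_{H^{s}(\R^{N})}=\sigma^{N-2s}[v]^{2}_{H^{s}(\R^{N})}+\sigma^{N}\|v\|^{2}_{L^{2}(\R^{N})}$. Applying this to the minimizer $u$ of Lemma \ref{lem2}, which has $\V(u)=1>0$, gives $\I(u_{\sigma})=\sigma^{N-2s}\T(u)-\sigma^{N}\to-\infty$ as $\sigma\to\infty$ (because $N>N-2s$), while $\|u_{\sigma}\|_{H^{s}(\R^{N})}\to\infty$; hence $u_{0}:=u_{\sigma}$ with $\sigma$ large enough satisfies $\|u_{0}\|_{H^{s}(\R^{N})}>\rho$ and $\I(u_{0})<0$.

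Finally I would conclude that $c$ is well defined: $\Gamma\neq\emptyset$ because the segment $\gamma(t)=tu_{0}$ belongs to it ($\gamma(0)=0$, $\I(\gamma(1))=\I(u_{0})<0$), so the infimum runs over a nonempty family; and for every $\gamma\in\Gamma$ the continuous map $t\mapsto\|\gamma(t)\|_{H^{s}(\R^{N})}$ vanishes at $t=0$ while $\|\gamma(1)\|_{H^{s}(\R^{N})}>\rho$ (since $\I(\gamma(1))<0$ and $\I\ge0$ on the ball of radius $\rho$), hence it equals $\rho$ at some $t^{*}\in(0,1)$, giving $\max_{t\in[0,1]}\I(\gamma(t))\ge\I(\gamma(t^{*}))\ge\eta$; therefore $\eta\le c\le\max_{t\in[0,1]}\I(tu_{0})<\infty$. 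I do not anticipate a genuine obstacle here — the only step that needs a little care is the quadratic-versus-critical balance in (ii), and the required inequality \eqref{Gbound} is already available from the proof of Lemma \ref{lem2}.
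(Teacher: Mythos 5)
Your items (i) and (ii) follow the paper's proof line by line: the same pointwise bound $G(t)\le K|t|^{2^{*}_{s}}-\tfrac{a}{4}t^{2}$ derived from $(g2)$ and $(g3)$, followed by the Sobolev inequality \eqref{FSI}, giving $\I(u)\ge c_{0}\|u\|^{2}_{H^{s}(\R^{N})}-C'\|u\|^{2^{*}_{s}}_{H^{s}(\R^{N})}$ and a strictly positive level on the sphere of small radius $\rho$. Item (iii), however, takes a genuinely different route. The paper produces a second pointwise bound $G(t)\ge -\tfrac{C'_{a}}{2}t^{2}+\tfrac{1}{2\cdot 2^{*}_{s}}|t|^{2^{*}_{s}}$ (again from $(g2)$--$(g3)$) and sends $\I(tu)\to-\infty$ along the ray $t\mapsto tu$; you instead apply the dilation $u_{\sigma}(x)=u(x/\sigma)$ to the Lemma~\ref{lem2} minimizer $u$ with $\V(u)=1$, so that $\I(u_{\sigma})=\sigma^{N-2s}\T(u)-\sigma^{N}\to-\infty$ without needing a lower bound on $G$ at all. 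Your version is cleaner and exploits the scaling structure already used throughout the paper, at the cost of invoking the existence result of Lemma~\ref{lem2}; the paper's is self-contained, relying only on the growth hypotheses. Your closing argument that $c$ is well defined (nonemptiness of $\Gamma$ via the segment to $u_{0}$, plus the intermediate-value crossing of the sphere $\|u\|=\rho$ yielding $c\ge\eta$) is the standard mountain-pass observation that the paper leaves implicit; making it explicit is correct and harmless.
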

\begin{proof}
Clearly $\I(0)=0$.
By using the assumptions $(g2)$ and $(g3)$, there exists a positive constant $C_{a}$ such that 
\begin{equation}\label{Gt}
G(t)\leq -\frac{a}{4}t^{2}+C_{a}|t|^{2^{*}_{s}} \mbox{ for all } t \in \R.
\end{equation}
Then, by using (\ref{FSI}) and (\ref{Gt}), we get
\begin{align*}
\I(u)&\geq \frac{1}{2}[u]^{2}_{H^{s}(\R^{N})}+\frac{a}{4}\|u\|^{2}_{L^{2}(\R^{N})}-C_{a}\|u\|_{L^{2^{*}_{s}}(\R^{N})}^{2^{*}_{s}} \\
&\geq \min\Bigl\{\frac{1}{2}, \frac{a}{4}\Bigr\}\|u\|^{2}_{H^{s}(\R^{N})}-C_{a} \|u\|_{L^{2^{*}_{s}}(\R^{N})}^{2^{*}_{s}} \\
&\geq \min\Bigl\{\frac{1}{2}, \frac{a}{4}\Bigr\} \|u\|^{2}_{H^{s}(\R^{N})}-C_{a}S_{*}^{\frac{2^{*}_{s}}{2}} \|u\|_{H^{s}(\R^{N})}^{2^{*}_{s}},
\end{align*}
which implies that there exist $\rho>0$ and $\eta>0$ such that $\I(u)\geq \eta$ for all $\|u\|_{H^{s}(\R^{N})}= \rho$. \\
By using $(g2)$ and $(g3)$ again, there exists $C'_{a}>0$ such that 
$$
G(t)\geq -\frac{C'_{a}}{2}t^{2}+\frac{1}{2 2^{*}_{s}}|t|^{2^{*}_{s}} \mbox{ for all } t \in \R.
$$
Then, for any $u\in H^{s}(\R^{N})\setminus \{0\}$ and $t>0$
$$
\I(tu)\leq \frac{t^{2}}{2}[u]^{2}_{H^{s}(\R^{N})}+\frac{C'_{a}t^{2}}{2}\|u\|^{2}_{L^{2}(\R^{N})}-\frac{t^{2^{*}_{s}}}{2 2^{*}_{s}}\|u\|^{2^{*}_{s}}_{L^{2^{*}_{s}}(\R^{N})} \rightarrow -\infty \mbox{ as } t \rightarrow +\infty,
$$
so we can find $u_{0}\in H^{s}(\R^{N})$ such that $\|u_{0}\|_{H^{s}(\R^{N})}>\rho$ and $\I(u_{0})<0$.

\end{proof}

\begin{proof}[Proof of Theorem \ref{thm2}]
We begin proving that there exists $\gamma \in C([0,1], H^{s}(\R^{N}))$ such that $\gamma(0)=0$, $\I(\gamma(1))<0$, $\omega\in \gamma([0,1])$ and
$$
\max_{t\in [0,1]} \I(\gamma(t)) = m. 
$$
Let
\begin{equation*}
\gamma(t)(x)= 
\left\{
\begin{array}{ll}
\omega(\frac{x}{t}) &\mbox{ for } t>0\\
0 &\mbox{ for } t=0.   
\end{array}
\right.
\end{equation*}
Then we can see  
\begin{align}\label{Igamma}
&\|\gamma(t)\|^{2}_{H^{s}(\R^{N})}=t^{N-2s}[\omega]^{2}_{H^{s}(\R^{N})}+t^{N}\|\omega\|^{2}_{L^{2}(\R^{N})} \nonumber \\
&\I(\gamma(t))= \frac{t^{N-2s}}{2} [\omega]^{2}_{H^{s}(\R^{N})}- t^{N} \int_{\R^{N}} G(\omega) \,dx. 
\end{align}
Hence $\gamma\in C([0, \infty), H^{s}(\R^{N}))$. \\
By using the Pohozaev identity (\ref{Pohid}), we know that 
\begin{equation}\label{Gomega}
\int_{\R^{N}}G(\omega) dx=\frac{N-2s}{2N} [\omega]^{2}_{H^{s}(\R^{N})}>0. 
\end{equation}
Putting together (\ref{Igamma}) and (\ref{Gomega}) we have
$$
\frac{d}{dt} \I(\gamma(t))=t^{N-2s-1}(1-t^{2s})\frac{N-2s}{2}[\omega]^{2}_{H^{s}(\R^{N})}, 
$$
and in particular 
$$
\frac{d}{dt} \I(\gamma(t))>0 \mbox{ for } t\in (0, 1) \,
\mbox{ and } \, \frac{d}{dt} \I(\gamma(t))<0 \mbox{ for } t>1.
$$
Thus, for  $L>1$ sufficiently large, there exists a path $\gamma(t): [0, L] \rightarrow H^{s}(\R^{N})$ such that $\gamma(0)=0$, $\I(\gamma(L))<0$, $\omega\in \gamma([0,L])$ and
$$
\max_{t\in [0,L]} \I(\gamma(t)) = m. 
$$
After a suitable scale change in $t$, we can get the desired path $\gamma\in \Gamma$.
Hence, $c\leq m$. From the proof of Theorem \ref{thm1}, we can deduce that $m=\inf_{v \in \mathcal{P}} \I(v)$.\\
Now, let
$$
\mathcal{H}(u)=\frac{N-2s}{2}[u]^{2}_{H^{s}(\R^{N})}-N\int_{\R^{N}} G(u) dx=N\I(u)-s[u]^{2}_{H^{s}(\R^{N})}.
$$
Modifying slightly the arguments of the proof of Lemma \ref{lem3}, it is possible to show that  
there exists $\rho_{0}>0$ such that if $0<\|u\|_{H^{s}(\R^{N})}\leq \rho_{0}$ then $\mathcal{H}(u)>0$.
Then, for any $\gamma \in \Gamma$, we have $\gamma(0)=0$ and $\mathcal{H}(\gamma(1))\leq N\I(\gamma(1))<0$. As a consequence there exists $t_{0}\in [0, 1]$ such that 
$$
\|\gamma(t_{0})\|_{H^{s}(\R^{N})}>\rho_{0} \mbox{ and }
\mathcal{H}(\gamma(t_{0}))=0.
$$
Since $\gamma(t_{0})\in \gamma([0, 1])\cap \mathcal{P}$, we have $\gamma([0, 1])\cap \mathcal{P}\neq \emptyset$. Hence $c=m$.
\end{proof}

\smallskip
\noindent {\bf Acknowledgements.}
The author is grateful to the referee for his/her comments for improvement of the article.

\end{document}